\pdfoutput=1
\documentclass[a4paper, 11pt]{amsart}
\usepackage[top=1.27in, bottom=1.27in, left=1.28in, right=1.28in]{geometry}
\usepackage{graphicx}
\usepackage{amsfonts}
\usepackage{epsf}
\usepackage{amssymb}
\usepackage{amsmath}
\usepackage{amscd}
\usepackage{amsthm}
\usepackage{mathtools}
\usepackage{tikz}
\usetikzlibrary{
  cd,
  calc,
  positioning,
  arrows,
  decorations.pathreplacing,
  decorations.markings,
}
\usepackage{pdfpages}
\usepackage{setspace}
\usepackage{hyperref}
\usepackage[all]{xy}
\usetikzlibrary{matrix}
\usepackage{verbatim}
\usepackage{enumerate}
\usepackage{mathrsfs}
\usepackage{pinlabel}
\usepackage{lscape}
\usepackage{color}
\usepackage{enumitem}

\newtheorem{theorem}{Theorem}[section]

\newtheorem{proposition}[theorem]{Proposition}
\newtheorem{corollary}[theorem]{Corollary}
\newtheorem{lemma}[theorem]{Lemma}

\newtheorem{question}[theorem]{Question}
\newtheorem*{warning}{Warning}

\newenvironment{Warning}{\begin{warning}\rm}{\end{warning}}

\theoremstyle{definition}

\newtheorem{example}[theorem]{Example}

\newtheorem*{case2'}{Case 2$'$}
\newtheorem{theorem-named}{}
\newtheorem{theorem-labeled}{Theorem}
\newtheorem{definition-named}{}
\newtheorem{conjecture-named}{}
\newtheorem{case-named}{}

\numberwithin{equation}{section}


\newcommand{\R}{\mathbb{R}}
\newcommand{\C}{\mathbb{C}}

\def\Z{\mathbb{Z}}

\def\R{\mathbb{R}}

\def\C{\mathbb{C}}

\def\et{\quad\mbox{and}\quad}
\makeatletter
%

\usepackage{epsf,graphics,amssymb,amsmath,xcolor,asymptote,microtype}
\usepackage{hyperref}
\usepackage[justification=normal]{caption}
\usepackage[nameinlink,nosort]{cleveref}

\begin{asydef}
real strokewidth = 0.7;
transform myt = scale(1, 1.66);
pen braidpen = linewidth(strokewidth);

void mydraw(path p) {
    draw(myt * p, braidpen);
}

// x = some crossing
// 0 = the crossing in question
// o = no crossing

// do
// .....
// ...o.
// ..0..
// .x...
// .....

// except
// .x...
// ..x..
// ..0..
// .x...
// .....

// do
// ..x..
// ...x.
// ...x.
// ..0..
// .x...
// .....

// do
// .....
// .o...
// ..0..
// ...x.
// .....
real howFarOnTop(int[][] B, int x, int y) {
     if (( ((B[x + 1][y] == 0) && (B[x][y - 1] != 0) && (B[x - 1][y - 2] != 0) && ((B[x - 1][y + 1] == 0) || (B[x][y] == 0)))  || ((B[x + 1][y] != 0) && (B[x][y - 1] != 0) && (B[x - 1][y - 2] != 0) && (B[x + 1][y + 1] != 0) && (B[x][y + 2] != 0))) ||
         ( ((B[x][y] == 0) && (B[x + 1][y - 1] != 0) && (B[x + 2][y - 2] != 0) && ((B[x + 2][y + 1] == 0) || (B[x + 1][y] == 0)))  || ((B[x][y] != 0) && (B[x + 1][y - 1] != 0) && (B[x + 2][y - 2] != 0) && (B[x][y + 1] != 0) && (B[x + 1][y + 2] != 0))))
     {
        return 0.15;
        dot(myt * (x,y), scale(1.5) * green);
    }
    return 0;
}

real howFarOnBottom(int[][] B, int x, int y) {
     if (( ((B[x + 1][y - 1] == 0) && (B[x][y] != 0) && (B[x - 1][y + 1] != 0) && ((B[x - 1][y - 2] == 0) || (B[x][y - 1] == 0)))  || ((B[x + 1][y - 1] != 0) && (B[x][y] != 0) && (B[x - 1][y + 1] != 0) && (B[x + 1][y - 2] != 0) && (B[x][y - 3] != 0))) ||
         ( ((B[x][y - 1] == 0) && (B[x + 1][y] != 0) && (B[x + 2][y + 1] != 0) && ((B[x + 2][y - 2] == 0) || (B[x + 1][y - 1] == 0)))  || ((B[x][y - 1] != 0) && (B[x + 1][y] != 0) && (B[x + 2][y + 1] != 0) && (B[x][y - 2] != 0) && (B[x + 1][y - 3] != 0))))
        return -0.15;
    return 0;
}

path hidingDisk(int x, int y) {
    return shift(x - .5, y + .5) * scale(0.2) * inverse(myt) * unitcircle;
}

void drawToLeft(int[][] B, int x, int y, pen braidpen, bool gap) {
    pair outgoingAngle = ((B[x + 1][y - 1] == 0) || ((B[x + 1][y - 1] != 0) && (B[x][y + 1] != 0) && (B[x + 1][y + 2] != 0)) || ((B[x + 1][y - 1] != 0) && (B[x + 1][y - 2] != 0) && (B[x][y - 3] != 0))) ? up : up + left;
    pair incomingAngle = ((B[x - 1][y + 1] == 0) || ((B[x - 1][y + 1] != 0) && (B[x - 1][y + 2] != 0) && (B[x][y + 3] != 0)) || ((B[x - 1][y + 1] != 0) && (B[x][y - 1] != 0) && (B[x - 1][y - 2] != 0))) ? up : up + left;

    pair midpoint = (x - .5, y + .5);
    path p;
    if (howFarOnBottom(B, x, y) != 0)
        p = (x,y + howFarOnBottom(B, x, y) + howFarOnTop(B, x, y)){outgoingAngle}..(x - .2, y + .2)..midpoint..{incomingAngle}(x - 1, y + 1 + howFarOnTop(B, x - 1, y + 1) + howFarOnBottom(B, x - 1, y + 1));
    else if (howFarOnTop(B, x - 1, y + 1) != 0)
        p = (x,y + howFarOnBottom(B, x, y) + howFarOnTop(B, x, y)){outgoingAngle}..midpoint..(x - .8, y + .8)..{incomingAngle}(x - 1, y + 1 + howFarOnTop(B, x - 1, y + 1) + howFarOnBottom(B, x - 1, y + 1));
    else
        p = (x,y + howFarOnBottom(B, x, y) + howFarOnTop(B, x, y)){outgoingAngle}..midpoint..{incomingAngle}(x - 1, y + 1 + howFarOnTop(B, x - 1, y + 1) + howFarOnBottom(B, x - 1, y + 1));
    if (gap) {
        real[][] i = intersections(p, hidingDisk(x, y));
        draw(myt * subpath(p, 0, i[0][0]), braidpen);
        draw(myt * subpath(p, i[i.length - 1][0], 3), braidpen);
    } else
        draw(myt * p, braidpen);
}

void drawToRight(int[][] B, int x, int y, pen braidpen, bool gap) {
    pair outgoingAngle = ((B[x][y - 1] == 0) || ((B[x][y - 1] != 0) && (B[x + 1][y + 1] != 0) && (B[x][y + 2] != 0)) || ((B[x][y - 1] != 0) && (B[x][y - 2] != 0) && (B[x + 1][y - 3] != 0))) ? up : up + right;
    pair incomingAngle = ((B[x + 2][y + 1] == 0) || ((B[x + 2][y + 1] != 0) && (B[x + 2][y + 2] != 0) && (B[x + 1][y + 3] != 0)) || ((B[x + 2][y + 1] != 0) && (B[x + 1][y - 1] != 0) && (B[x + 2][y - 2] != 0))) ? up : up + right;

    pair midpoint = (x + .5, y + .5);
    path p;
    if (howFarOnBottom(B, x, y) != 0)
        p = (x,y + howFarOnBottom(B, x, y) + howFarOnTop(B, x, y)){outgoingAngle}..(x + .2, y + .2)..midpoint..{incomingAngle}(x + 1, y + 1 + howFarOnTop(B, x + 1, y + 1) + howFarOnBottom(B, x + 1, y + 1));
    else if (howFarOnTop(B, x + 1, y + 1) != 0)
        p = (x,y + howFarOnBottom(B, x, y) + howFarOnTop(B, x, y)){outgoingAngle}..midpoint..(x + .8, y + .8)..{incomingAngle}(x + 1, y + 1 + howFarOnTop(B, x + 1, y + 1) + howFarOnBottom(B, x + 1, y + 1));
    else
        p = (x,y + howFarOnBottom(B, x, y) + howFarOnTop(B, x, y)){outgoingAngle}..midpoint..{incomingAngle}(x + 1, y + 1 + howFarOnTop(B, x + 1, y + 1) + howFarOnBottom(B, x + 1, y + 1));
    if (gap) {
        real[][] i = intersections(p, hidingDisk(x + 1, y));
        draw(myt * subpath(p, 0, i[0][0]), braidpen);
        draw(myt * subpath(p, i[i.length - 1][0], 3), braidpen);
    } else
        draw(myt * p, braidpen);
}

void drawbraid(int[] b, int strands = -1, pen[] strandPens = {}) {
    // delete zeroes
    for (int i = 0; i < b.length;) {
        if (b[i] == 0)
            b.delete(i);
        else
            ++i;
    }

    // get right number of strands
    if (strands == -1) {
        int m = 1;
        for (int i = 0; i < b.length; ++i)
            if (abs(b[i]) > m)
                m = abs(b[i]);
        strands = m + 1;
    }

    // slide, prepare matrix
    int[][] B = array(strands + 3, array(b.length + 4, 0));
    for (int i = 0; i < b.length; ++i) {
        int x = abs(b[i]) + 1;
        int y = b.length + 3;
        while ((B[x - 1][y - 1] == 0) && (B[x][y - 1] == 0) && (B[x + 1][y - 1] == 0) && (y > 2))
            --y;
        B[x][y] = (b[i] > 0) ? 1 : -1;
    }

    // until where are there crossings?
    int highestY = 0;
    for (int y = 2; y < B[0].length; ++y) {
        bool thereAreCrossings = false;
        for (int x = 1; x < strands + 1; ++x)
            if (B[x][y] != 0) {
                thereAreCrossings = true;
                break;
            }
        if (! thereAreCrossings) {
            highestY = y;
            break;
        }
    }

    //grid
/*    for (int x = 1; x < strands + 1; ++x)
        draw(myt * ((x, 1)--(x, highestY)), lightgray);
    for (int y = 1; y < highestY; ++y)
        draw(myt * ((1, y)--(strands, y)), lightgray);
*/

    // follow the strands
    int[] strandNumbers;
    for (int x = 0; x < strands; ++x)
        strandNumbers.push(x);

    // draw
    for (int y = 2; y < highestY; ++y)
        for (int x = 1; x < strands + 1; ++x) {
            pen leftPen = braidpen;
            pen rightPen = braidpen;
            if (strandPens.length != 0) {
                leftPen = strandPens[strandNumbers[x - 1]];
                if (x != 1)
                    rightPen = strandPens[strandNumbers[x - 2]];
            }
            if (B[x][y] != 0) {
                drawToRight(B, x - 1, y, rightPen, (B[x][y] < 0));
                drawToLeft(B, x, y, leftPen, (B[x][y] > 0));

                int tmp = strandNumbers[x - 2];
                strandNumbers[x - 2] = strandNumbers[x - 1];
                strandNumbers[x - 1] = tmp;
            }
            else if (B[x + 1][y] == 0)
                draw(myt * ((x, y + howFarOnTop(B, x, y))--(x, y + 1 + howFarOnBottom(B, x, y + 1))), leftPen);
        }

    // cut off above and below
    clip(myt * ((0,2)--(2 + strands, 2)--(2 + strands, highestY)--(0, highestY)--cycle));
} 

void drawbraid(string s, int strands = -1, pen[] strandPens = {}) {
    int[] b = {};
    for(int i = 0; i < length(s); ++i) {
        int c = 0;
        int a = ascii(substr(s, i));
        if ((a >= 97) && (a <= 122))
            c = a - 96;
        else if ((a >= 65) && (a <= 90))
            c = -(a - 64);
        if (c != 0)
            b.push(c);
    }
    drawbraid(b, strands, strandPens);
}

//unitsize(1cm);
//drawbraid("abcddcbaabcd");
\end{asydef}

\begin{asydef}
unitsize(2.5mm);
strokewidth = 0.6;
\end{asydef}



\begin{document}


\title{ A note on the four-dimensional clasp number of knots}

\author{Peter Feller}
\address{ETH Zurich, R\"amistrasse 101, 8092 Zurich, Switzerland}
\email{peter.feller@math.ch}
\urladdr{people.math.ethz.ch/~pfeller/}
\author{JungHwan Park}
\address{Georgia Institute of Technology, Atlanta, GA, USA}
\email{junghwan.park@math.gatech.edu }
\urladdr{people.math.gatech.edu/~jpark929/}
\def\subjclassname{\textup{2020} Mathematics Subject Classification}
\expandafter\let\csname subjclassname@1991\endcsname=\subjclassname
\expandafter\let\csname subjclassname@2000\endcsname=\subjclassname
\subjclass{57K25, 57N70}

\keywords{}

\begin{abstract}
Among the knots that are the connected sum of two torus knots with cobordism distance~1, we characterize those that have $4$-dimensional clasp number at least~2, and we show that their $n$-fold connected self-sum has $4$-dimensional clasp number at least~$2n$. Our proof works in the topological category.
To contrast this, we build a family of topologically slice knots for which the $n$-fold connected self-sum has $4$-ball genus $n$ and $4$-dimensional clasp number at least $2n$.
\end{abstract}
\maketitle

\section{Introduction}\label{sec:intro}
Recently the authors determined for which pairs of torus knots $\{ T_{p,q}, T_{p',q'}\}$ the cobordism distance is equal to 1, with one exception~\cite[Thm.~1.2]{FellerPark}. This was done by comparing explicit constructions of cobordisms with the lower bound for the cobordism distance using the $\nu^+$-invariant~\cite{Hom-Wu:2016-1} from the Heegaard Floer knot complex.
As an application, the authors determined which pairs of torus knots have Gordian distance $1$; see~\cite[Cor.~1.3]{FellerPark}. The first result of the present article has two motivations.

Firstly, when determining Gordian distance $1$ pairs of torus knots, we relied on $\nu^+$, but speculated that the proof could be done using the Tristram-Levine signatures~\cite{Tristram:1969-1,Levine:1969-1}; see \cite[Rmk.~4.3]{FellerPark}. Here, we partially confirm this speculation using signature calculations via the Hirzebruch-Brieskorn formula~\cite{Brieskorn_DifftopovonSing, GambaudoGhys_BraidsSignatures}.

Secondly, recent interest in the $4$-dimensional clasp number $c_4$ and its difference to the $4$-ball genus $g_4$~\cite{Kronheimer-Mrowka:2019-1,kronheimer-talk,Juhasz-Zemke:2020-1,Daemi-Scaduto:2020-1, daemi-scaduto-talk}, made us revisit the lower bounds we had on the Gordian distance of the pairs $\{ T_{p,q}, T_{p',q'}\}$ and consider whether they yield lower bounds on $c_4(K)$ and $c_4(\#^nK)$, where $K = T_{p,q} \# -T_{p',q'}$.
We see this article as shining a spotlight on what can be achieved using a somewhat classical setup of concordance homomorphism, while the new exciting invariants from~\cite{Kronheimer-Mrowka:2019-1,Juhasz-Zemke:2020-1,Daemi-Scaduto:2020-1} might be used to discover interesting phenomena beyond.

We write $-J$ to denote the reverse of the mirror image of a knot $J$, and we write $g_4^\mathrm{top}$ and $c_4^\mathrm{top}$ to denote the topological counterparts of $g_4$ and $c_4$, respectively.

\begin{theorem}\label{thm:claspnrvsg4fortorusknots} Let $\{ T_{p,q}, T_{p',q'}\}$ be a pair of positive torus knots and $K = T_{p,q} \# -T_{p',q'}$. 
If $g_4(K) =1$ and the Gordian distance between $T_{p,q}$ and $T_{p',q'}$ is not $1$, then
\[g_4(\#^nK) = g_4^\mathrm{top}(\#^nK) =n \et c_4(\#^nK)\geq c_4^\mathrm{top}(\#^nK)\geq 2n.\]
Furthermore, if $\{ T_{p,q}, T_{p',q'}\}$ is either $\{ T_{2,7}, T_{3,4}\}$, $\{ T_{2,9}, T_{3,5}\}$, or $\{ T_{3,7}, T_{4,5}\}$, then $$g_4(\#^nK) = g_4^\mathrm{top}(\#^nK) = n \et c_4(\#^nK)=c_4^\mathrm{top}(\#^nK)= 2n.$$ \end{theorem}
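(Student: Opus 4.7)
The plan is to reduce the entire theorem to a single signature claim and then verify that claim for each relevant pair of torus knots via the Hirzebruch-Brieskorn formula. The upper bound $g_4(\#^n K) \leq n$ follows immediately from subadditivity of $g_4$ under connected sum together with the assumption $g_4(K) \leq 1$, so the real content lies in the lower bounds.

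The key claim is the following: under the hypotheses, there exist unit complex numbers $\omega_+,\omega_-$ with $\sigma_{\omega_+}(K) = 2$ and $\sigma_{\omega_-}(K) = -2$. Given this, the Tristram-Levine signature inequality $|\sigma_\omega(J)| \leq 2 g_4^{\mathrm{top}}(J)$ combined with the additivity $\sigma_\omega(\#^n K) = n\,\sigma_\omega(K)$ yields $g_4^{\mathrm{top}}(\#^n K) \geq n$. For the clasp number I would apply the signed refinement: if $J$ bounds a topologically locally flat, normally immersed disk in $B^4$ with $c^+$ positive and $c^-$ negative double points, then $-2c^- \leq \sigma_\omega(J) \leq 2c^+$ for every regular $\omega$. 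Evaluating at $\omega_+$ and $\omega_-$ for $\#^n K$ gives $c_4^{\mathrm{top},+}(\#^n K) \geq n$ and $c_4^{\mathrm{top},-}(\#^n K) \geq n$, hence $c_4^{\mathrm{top}}(\#^n K) \geq 2n$; the smooth bounds follow a fortiori.

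To prove the claim I would use the Hirzebruch-Brieskorn formula to express $\sigma_\omega(T_{p,q})$ as an explicit count of lattice points in $\{1,\ldots,p-1\}\times\{1,\ldots,q-1\}$, which makes $\sigma_\omega(K) = \sigma_\omega(T_{p,q}) - \sigma_\omega(T_{p',q'})$ a concrete step function in $\omega$. The upper bound $|\sigma_\omega(K)| \leq 2$ is automatic from $g_4(K) \leq 1$; the task is to run through the classification of cobordism-distance-one pairs from the previous paper and to verify, for each pair with Gordian distance not $1$, that this step function genuinely attains both values $+2$ and $-2$. The guiding heuristic is the classical fact that if $u(K) = 1$ then $\sigma_\omega(K)$ is uniformly constrained to $\{0,2\}$ or to $\{-2,0\}$, so the hypothesis Gordian distance $\neq 1$ is exactly what opens the door for both extremes to occur; still, the Brieskorn formula is needed to confirm this pair-by-pair. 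I expect this combinatorial case analysis to be the main obstacle.

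For the furthermore part, for each of the three pairs $\{T_{2,7}, T_{3,4}\}$, $\{T_{2,9}, T_{3,5}\}$, $\{T_{3,7}, T_{4,5}\}$ one should exhibit an explicit length-$2$ unknotting sequence for $K$, so that $u(K) = 2$. Then $c_4(K) \leq u(K) = 2$, and subadditivity of $c_4$ under connected sum gives $c_4(\#^n K) \leq 2n$; this matches the lower bound and forces $c_4(\#^n K) = c_4^{\mathrm{top}}(\#^n K) = 2n$.
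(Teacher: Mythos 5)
Your treatment of the main statement follows essentially the same route as the paper: establish that $\sigma_\omega(K)$ attains both $+2$ and $-2$ at regular values $\omega$, then use additivity plus the Tristram--Levine bound for $g_4^{\mathrm{top}}$ and the clasp-number refinement for locally flat normally immersed disks (the paper's Lemma~\ref{lem:sigbounds}) to get $g_4^{\mathrm{top}}(\#^nK)\geq n$ and $c_4^{\mathrm{top}}(\#^nK)\geq 2n$. Two remarks. First, be careful with which sign of clasp pairs with which sign of signature in your ``signed refinement'': with the usual convention the positive trefoil has $\sigma_\omega<0$ and a single positive clasp, so your inequality $-2c^-\leq\sigma_\omega(J)\leq 2c^+$ is stated with the roles of $c^\pm$ reversed; this is harmless for the total bound $c_4^{\mathrm{top}}\geq 2n$ but would matter for the signed versions. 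Second, the step you defer as ``the main obstacle'' --- verifying that both extremes $\pm2$ occur for every pair in the classification --- is precisely the computational core of the paper (its Lemma~\ref{lem:sigjumps} on the jump structure of $\sigma_t(T_{p,q})$ near $0$, applied to the two infinite families (I), (II) and the finite list (III)); your heuristic that Gordian distance $\neq 1$ ``opens the door'' is not a proof, and the pair-by-pair check cannot be skipped.

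There is, however, a genuine flaw in your ``furthermore'' argument: you propose to exhibit a length-$2$ \emph{unknotting} sequence for $K=T_{p,q}\#-T_{p',q'}$, i.e.\ to show $u(K)=2$. No such sequence is available, and there is no reason to believe one exists (conjectural additivity of the unknotting number would put $u(K)$ far above $2$ for these pairs); in any case nothing in the literature or in the explicit constructions provides it. What is actually needed, and what the paper does, is weaker: exhibit two crossing changes turning $T_{p,q}$ into $T_{p',q'}$ (for $\{T_{2,7},T_{3,4}\}$ and $\{T_{2,9},T_{3,5}\}$ these are in the literature; for $\{T_{3,7},T_{4,5}\}$ the paper gives an explicit braid-word manipulation). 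This turns $K$ into the slice knot $T_{p',q'}\#-T_{p',q'}$ after two crossing changes, so $\#^nK$ becomes slice after $2n$ crossing changes, giving $c_4(\#^nK)\leq 2n$ directly from the definition of $c_4$ via concordances and crossing changes (equivalently $c_4\leq u_s\leq 2$ for $K$ plus subadditivity). Replace the claim $u(K)=2$ by this slicing-number argument and the upper bound goes through; as written, your proof of the ``furthermore'' part rests on a step that would very likely fail.
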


We comment on the condition $g_4(K)=1$.
The point is that $g_4(K)=1$ implies that $\{ T_{p,q}, T_{p',q'}\}$ must be one of an infinite family of pairs of torus knots, among which we know which are Gordian distance 1 apart, and, in fact, infinitely many are further apart~\cite{FellerPark}; see Section~\ref{sec:ProofviaSig}.
With this the first part of Theorem~\ref{thm:claspnrvsg4fortorusknots} reduces to showing $g_4^\mathrm{top}(\#^nK) \geq n$ and $c_4^\mathrm{top}(\#^nK)\geq 2n$ for an explicit infinite family of knots.
We do this in Section~\ref{sec:ProofviaSig} using Tristram-Levine signatures.
The `furthermore'-part comes down to finding appropriate crossing changes. For the pairs  $\{ T_{2,7},T_{3,4}\}$ and $\{ T_{2,9}, T_{3,5}\}$,\footnote{These pairs are familiar in singularity theory as the knots of singularities of the pairs of simple singularities $\{A_6,E_6\}$ and $\{A_8,E_8\}$, which have 
$\delta$-constant deformations to $A_4$ and $A_6$, respectively.} we believe Theorem~\ref{thm:claspnrvsg4fortorusknots} to be known to experts, but for the pair $\{ T_{3,7}, T_{4,5}\}$ we will exhibit two crossing changes turning one into the other explicitly.
\begin{question}
Are there other pairs of torus knots for which $g_4(K)=1$ and $c_4(K)=2$?
\end{question}

We also obtain further equivalent characterizations of Gordian distance 1 pairs of torus knots (compare with \cite[Cor.~1.3]{FellerPark}). Let $u_s(J)$ denote the slicing number of a knot $J$. We have the following chain of inequalities:
$$u_s(J) \geq c_4(J) \geq g_4(J).$$ It is unknown whether there is a knot $J$ with $u_s(J) > c_4(J)$; see e.g.~\cite[Prop.~6]{Owens-Strle:2016-1}.


\begin{corollary}If $\{ T_{p,q}, T_{p',q'}\}$ is a pair of positive torus knots, then the following statements are
equivalent.
\begin{enumerate}[font=\upshape]
\item\label{it:1cor14} The knots $T_{p,q}$ and $T_{p',q'}$ have Gordian distance $1$.
\item\label{it:2'cor14} The knot $T_{p,q}\# -T_{p',q'}$ has slicing number $1$.
\item\label{it:2cor14} The knot $T_{p,q}\# -T_{p',q'}$ has $4$-dimensional clasp number $1$.
\item\label{it:3cor14} The pair $\{ T_{p,q}, T_{p',q'}\}$ is one of the following:
\begin{enumerate}[font=\upshape]
\item $\{ T_{2,2n+1}, T_{2,2n+3}\}$ for $n\geq 0$,
\item $\{ T_{3,3n+1}, T_{3,3n+2}\}$ for $n\geq1$,
\item $\{ T_{2,5}, T_{3,4}\}$, $\{T_{2,7}, T_{3,5}\}$.\qed
\end{enumerate}
\end{enumerate}
\end{corollary}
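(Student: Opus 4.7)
The plan is to invoke \cite[Cor.~1.3]{FellerPark} for the equivalence of
(\ref{it:1cor14}) and (\ref{it:3cor14}), since that corollary already
characterizes Gordian-distance $1$ pairs of positive torus knots as precisely
the list in (\ref{it:3cor14}). It will then suffice to close the cycle
$(\ref{it:1cor14}) \Rightarrow (\ref{it:2'cor14}) \Rightarrow
(\ref{it:2cor14}) \Rightarrow (\ref{it:1cor14})$ via elementary concordance
arguments combined with Theorem~\ref{thm:claspnrvsg4fortorusknots}.

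For $(\ref{it:1cor14}) \Rightarrow (\ref{it:2'cor14})$, I would take a single
crossing change realizing the Gordian distance from $T_{p,q}$ to $T_{p',q'}$
and perform it inside the $T_{p,q}$ summand of $K = T_{p,q} \# -T_{p',q'}$,
converting $K$ into $T_{p',q'} \# -T_{p',q'}$, which is slice. This shows
$u_s(K) \leq 1$. For the matching lower bound, distinct positive torus knots
are linearly independent in the concordance group (by Tristram--Levine
signatures, for instance), so $K$ is not slice and
$u_s(K) \geq g_4(K) \geq 1$, giving $u_s(K) = 1$.

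The implication $(\ref{it:2'cor14}) \Rightarrow (\ref{it:2cor14})$ is then
immediate from the inequality chain $u_s \geq c_4 \geq g_4$: $u_s(K) = 1$
yields $c_4(K) \leq 1$, while non-sliceness of $K$ forces $c_4(K) \geq 1$,
and together these give $c_4(K) = 1$.

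The heart of the argument is $(\ref{it:2cor14}) \Rightarrow (\ref{it:1cor14})$,
which I would handle by applying Theorem~\ref{thm:claspnrvsg4fortorusknots}
contrapositively. From $c_4(K) = 1$ and non-sliceness of $K$ the same chain
of inequalities yields $g_4(K) = 1$. If the Gordian distance between $T_{p,q}$
and $T_{p',q'}$ were not $1$, Theorem~\ref{thm:claspnrvsg4fortorusknots}
applied with $n=1$ would force $c_4(K) \geq 2$, contradicting
$(\ref{it:2cor14})$. The main obstacle is therefore not in this corollary but
in Theorem~\ref{thm:claspnrvsg4fortorusknots} itself, which bundles the
classification of the $g_4=1$ pairs with the signature-based lower bound
$c_4 \geq 2$ excluding all pairs at Gordian distance greater than $1$.
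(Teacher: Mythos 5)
Your proposal is correct and matches the paper's (implicit) argument: the paper leaves the proof to the reader precisely because it follows from \cite[Cor.~1.3]{FellerPark} for (\ref{it:1cor14})$\Leftrightarrow$(\ref{it:3cor14}), the crossing-change construction and the chain $u_s \geq c_4 \geq g_4$ for (\ref{it:1cor14})$\Rightarrow$(\ref{it:2'cor14})$\Rightarrow$(\ref{it:2cor14}), and Theorem~\ref{thm:claspnrvsg4fortorusknots} (contrapositively, with $n=1$) for (\ref{it:2cor14})$\Rightarrow$(\ref{it:1cor14}). No gaps; note only that non-sliceness of $K$ under (\ref{it:2cor14}) already follows from $c_4(K)=1\neq 0$, so the appeal to linear independence of torus knots is only needed for the lower bound in (\ref{it:2'cor14}).
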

\begin{question}\label{q:gordianvsclasps}Are there pairs of torus knots for which their Gordian distance is strictly larger than $c_4(\#^nK)/n$ for some integer $n\geq1$?\end{question}

Given that not just $c_4-g_4$ but also $c_4^\mathrm{top}-g_4$
can be large even on connected sums of a positive and a negative torus knot (by Theorem~\ref{thm:claspnrvsg4fortorusknots}), it seems interesting to find a family of examples of topologically slice knots on which $c_4-g_4$ is unbounded. Note that if $J$ is topologically slice (i.e.~$c_4^\mathrm{top}(J)=g_4^\mathrm{top}(J)=0$), then the Tristram-Levine signature 
for the $4$-dimensional clasp number vanishes; hence, one needs to use other invariants. Following in the footsteps of Livingston-Friedl-Zentner~\cite{LivingstonFriedlZentner} and Juh\'{a}sz-Zemke~\cite{Juhasz-Zemke:2020-1} among others, we use the Ozsv\'{a}th-Stipsicz-Szab\'{o} $\Upsilon$-invariant~\cite{OSS_2014} to find such families. 

Indeed,
let $D$ denote the positive untwisted Whitehead double of the right-handed trefoil, and let $J_{p,q}$ denote the
$(p, q)$-cable of a knot $J$, where $p$ is the longitudinal winding.

\begin{theorem}\label{thm:top} If $K_i = D_{2,2i+1} \# - T_{2,2i+1} \# -D$ where $i>1$, then $K_i$ is  a topologically slice knot that satisfies
\[g_4(\#^nK_i) = n \et c_4(\#^nK_i)\geq 2n.\]
\end{theorem}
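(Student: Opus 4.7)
The plan is to prove three separate statements: topological sliceness, the equality $g_4(\#^nK_i)=n$, and the inequality $c_4(\#^nK_i)\ge 2n$, each using different Heegaard Floer invariants.

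\textbf{Topological sliceness and $g_4$.} Since $D$ is the untwisted Whitehead double of the right-handed trefoil, $\Delta_D(t)=1$, so by Freedman's theorem $D$ bounds a topologically locally flat disk $\Delta\subset B^4$. Cabling $\Delta$ with the $(2,2i+1)$-pattern yields a topological concordance from $D_{2,2i+1}$ to $T_{2,2i+1}$, so $K_i$ is topologically concordant to $T_{2,2i+1}\#-T_{2,2i+1}\#-D$, and hence topologically slice (using that $-D$ is also topologically slice). For the lower bound $g_4(\#^nK_i)\ge n$, use Hedden's cable formula: since $\tau(D)=g(D)=1$ and $\epsilon(D)=1$, we have $\tau(D_{2,2i+1})=2\tau(D)+\tau(T_{2,2i+1})=i+2$, hence $\tau(K_i)=(i+2)-i-1=1$, and $\tau(\#^nK_i)=n\le g_4(\#^nK_i)$. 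For the matching upper bound I would construct a smooth genus-$1$ slice surface for $K_i$---most naturally by showing that $K_i$ is smoothly concordant to $D$, equivalently that $D_{2,2i+1}\#-T_{2,2i+1}$ is smoothly concordant to $D\#D$, via explicit ribbon bands inside the $(2,2i+1)$-cabling pattern solid torus around $D$. Granting this, $g_4(\#^nK_i)\le n\,g_4(K_i)\le n$.

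\textbf{Clasp number lower bound.} Compute $\Upsilon_{K_i}$ using Hom's cable formula (valid for $\epsilon(D)=1$): on $[0,1]$, $\Upsilon_{D_{2,2i+1}}(t)=2\Upsilon_D(t)+\Upsilon_{T_{2,2i+1}}(t)$. Substituting $\Upsilon_D(t)=-t$ and $\Upsilon_{T_{2,2i+1}}(t)=-it$ on $[0,1]$ gives $\Upsilon_{D_{2,2i+1}}(t)=-(i+2)t$, and therefore $\Upsilon_{K_i}(t)=-t$ on $[0,1]$ (and $-(2-t)$ on $[1,2]$ by symmetry). Apply an $\Upsilon$-based signed clasp-number bound---in the spirit of Juh\'asz-Zemke and the refinements of Ozsv\'ath-Stipsicz-Szab\'o---which for a topologically slice knot combines the $\tau$-bound $c_\pm(K)\ge|\tau(K)|$ with an argument balancing positive and negative clasps, to conclude $c_4(K_i)\ge 2$. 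Since $\Upsilon$ is additive under connected sum, the same bound applied to $\#^nK_i$ (with $\Upsilon_{\#^nK_i}=n\,\Upsilon_{K_i}$) yields $c_4(\#^nK_i)\ge 2n$.

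\textbf{Main obstacle.} The principal difficulty is the smooth upper bound $g_4(K_i)\le 1$: the cabling construction only provides a topological concordance between $D_{2,2i+1}$ and $T_{2,2i+1}$, so upgrading to $K_i\sim D$ smoothly requires a genuinely four-dimensional ribbon-type argument (explicit bands in the cable pattern or an equivalent handle calculation). The hypothesis $i>1$ presumably enters here to sidestep the coincidence that $T_{2,3}$ is itself the companion of $D$ when $i=1$, which could obstruct the band-move construction. A secondary technical point is verifying that the $\Upsilon$-based clasp bound is sharp enough to yield the factor of $2$ in $c_4\ge 2n$; the standard $\tau$-derived bound alone only gives $c_4\ge n$.
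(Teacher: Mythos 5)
Your clasp-number argument has a genuine gap rooted in an incorrect $\Upsilon$ computation. The cabling formula you invoke, $\Upsilon_{D_{2,2i+1}}(t)=2\Upsilon_D(t)+\Upsilon_{T_{2,2i+1}}(t)$ on all of $[0,1]$, is not a valid identity here: since $D$ is $\nu^+$-equivalent to $T_{2,3}$ and $D_{2,2i+1}$ is $\nu^+$-equivalent to $(T_{2,3})_{2,2i+1}$ (Proposition~\ref{prop:nuplus}), one computes $\Upsilon_{D_{2,2i+1}}$ from the formal semigroup $\langle 4,6,2i+1\rangle$ of the $L$-space knot $(T_{2,3})_{2,2i+1}$ (Lemmas~\ref{lem:semigroup} and~\ref{lem:semigroupUpsilon}), and the answer is $-(i+2)t$ only on $[0,1/2]$, while it equals $-2+(2-i)t$ on $[1/2,1]$; in particular $\Upsilon_{D_{2,2i+1}}(1)=-i$, not $-(i+2)$. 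Hence $\Upsilon_{K_i}(t)=-t$ on $[0,1/2]$ but $\Upsilon_{K_i}(t)=-2+3t$ on $[1/2,1]$ (Lemma~\ref{lem:upsiloncompute}), so $\Upsilon_{K_i}(1)=+1$. This sign change is precisely what makes the clasp bound work: taking $\nu=-\Upsilon_{(\cdot)}(t)/t$ at small $t$ gives $c_{4,+}(\#^nK_i)\geq n$, and at $t=1$ gives $c_{4,-}(\#^nK_i)\geq n$, whence $c_4(\#^nK_i)\geq 2n$ via Lemma~\ref{lem:upsilonbound}. With your claimed $\Upsilon_{K_i}(t)=-t$ (which would be identical to $\Upsilon_{T_{2,3}}$), the bound $\max_t\Upsilon_{K_i}(t)/t+\max_t(-\Upsilon_{K_i}(t)/t)$ is $0$, and the fallback you sketch does not exist: $\tau(K_i)=1$ only gives $c_{4,+}\geq 1$, hence $c_4(\#^nK_i)\geq n$, and there is no "balancing" principle giving $c_{4,-}\geq 1$ from $\tau$ or from topological sliceness. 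So as written the proposal does not establish $c_4(\#^nK_i)\geq 2n$.

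Your proposed route to the smooth upper bound also fails, for the same reason: $K_i$ is \emph{not} smoothly concordant to $D$, because $\Upsilon$ is a smooth concordance invariant and $\Upsilon_{K_i}(1)=1\neq-1=\Upsilon_D(1)$; equivalently, $D_{2,2i+1}\#-T_{2,2i+1}$ is not smoothly concordant to $D\#D$. The paper obtains $g_4(K_i)\leq 1$ by a direct surface construction (following Hom--Wu): a genus $2i+3$ Seifert surface $\widetilde{\Sigma}$ for $K_i$ contains a genus $2i+2$ Seifert surface for the slice knot $J=T_{2,2i+1}\#D\#-T_{2,2i+1}\#-D$, and surgering $\widetilde{\Sigma}$ along a slice disk for $J$ in $B^4$ yields a genus-one surface for $K_i$. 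Your topological sliceness argument matches the paper, and your $\tau$-based lower bound $g_4(\#^nK_i)\geq n$ is a correct alternative to the paper's use of $\Upsilon$, but the two points above are where the real content of the theorem lies, and both need to be repaired.
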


The lower bounds for $c_4$ obtained by using Tristram-Levine signatures and the Ozsv\'{a}th-Stipsicz-Szab\'{o} $\Upsilon$-invariant fall into a rather general setup of invariants going back to Livingston~\cite{Livingston_Comp}; see Section~\ref{sec:interlude}. However, the lower bound for $c_4^\mathrm{top}$ cannot be obtained in the exact same way.
Regardless, we provide a bound for $c_4^\mathrm{top}$ (see Lemma~\ref{lem:sigbounds}) that yields the topological statements of our results above.

By design, the invariants we consider do not (or at least not in an obvious way) allow to show that $c_4(J)>2g_4(J)$ or $c_{4,+}(J)>g_4(J)$, where $c_{4,+}$ denotes a version of the $4$-dimensional clasp number that only counts positive clasps. In particular, such invariants will not allow to answer the next question.
\begin{question}\label{q:c>2g} Are there $4$-ball genus $1$ knots with arbitrarily large 4-dimensional clasp number?
Denoting by $\mathcal{C}$ the set of smooth concordance classes, is $\displaystyle\limsup_{[J]\neq 0 \in \mathcal{C}}\frac{c_4(J)}{g_4(J)}>2?$
\end{question}
In contrast, the recent work that was part of the inspiration for this note~\cite{Kronheimer-Mrowka:2019-1,Juhasz-Zemke:2020-1,Daemi-Scaduto:2020-1} has the potential to address Question~\ref{q:c>2g}. In fact, \cite[Thm.~1]{Daemi-Scaduto:2020-1} shows that the gap between $c_{4,+}$ and $g_4$ can be made arbitrarily large.

\subsection*{Acknowledgements} We thank Jennifer Hom, Allison Miller, Patrick Orson, and Mark Powell for helpful discussions.

\section{Lower bounds on $c_4$ and $g_4$ from concordance homomorphism}\label{sec:interlude} 


First, we consider the smooth category. The $4$-dimensional clasp number $c_4(J)$ of a knot $J$ can be defined as the smallest non-negative integer $k$ such that $J$ can be turned into the unknot using a finite sequence of smooth concordances and crossing changes with at most $k$ crossing changes.
Equivalently, one can define the $4$-dimensional clasp number as the smallest
non-negative integer $k$ such that there is a smoothly immersed disk with $k$ transverse double points (also called clasps) in $B^4$ bounding $J$; indeed, any such disk is isotopic to a smoothly immersed disk resulting from stacking a smooth concordance, the trace of $k$ crossing changes, and a smooth slice disk; see \cite[Prop.~2.1]{Owens-Strle:2016-1}. One defines weighted version $c_{4,+}(J)$ (resp.\ $c_{4,-}(J)$) as the smallest non-negative integer of positive-to-negative (resp.\ negative-to-positive) crossing changes in a sequence as above. In particular, for any knot $J$, 
\begin{equation}\label{eq:c+c-leqc}c_{4,+}(J)+c_{4,-}(J)\leq c_4(J).
\end{equation}
 
We recall a setup implicit in~\cite{Livingston_Comp} and explicit in \cite[Lem.~17]{Feller_14_GordianAdjacency} and \cite[Lem.~17]{LivingstonFriedlZentner}. Let $\mathcal{C}$ denote the smooth knot concordance group.

\begin{lemma}\label{lem:nu_bounds_c_4}
Let $\nu\colon\mathcal{C}\to \R$ be any homomorphism such that
\begin{itemize}
\item 
$\nu(J)\leq g_4(J)$ for every knot $J$ and
\item there exists a knot $J'$ with $\nu(J')=1$ such that $J'$ can be turned into a smoothly slice knot by a positive-to-negative crossing change,
\end{itemize}
then, for each knot $J$, we have
\begin{equation}\label{eq:nu bounds c_4}\pushQED{\qed}
|\nu(J)|\leq g_4(J),\quad\nu(J)\leq c_{4,+}(J), \et -\nu(J)\leq c_{4,-}(J).\qedhere
\end{equation}
\end{lemma}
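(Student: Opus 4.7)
My plan is to deduce the three inequalities from a single \emph{sign-refined crossing change inequality}. The first bound $|\nu(J)|\leq g_4(J)$ is immediate: applying the first hypothesis to both $J$ and $-J$, and using that $\nu$ is a homomorphism (so $\nu(-J) = -\nu(J)$) together with $g_4(-J) = g_4(J)$, one obtains $\pm\nu(J) \leq g_4(J)$. The third bound $-\nu(J) \leq c_{4,-}(J)$ follows from the second applied to $-J$, via $c_{4,+}(-J) = c_{4,-}(J)$. So the only real content is the middle inequality $\nu(J) \leq c_{4,+}(J)$.

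For this, the key step is the following \emph{sign-refined crossing change inequality}: if knots $K_+$ and $K_-$ differ by a single positive-to-negative crossing change, then $0 \leq \nu(K_+) - \nu(K_-) \leq 1$. The upper bound is standard: the crossing-change clasp can be resolved by a tube to yield a smoothly embedded genus-$1$ cobordism between $K_+$ and $K_-$, hence $g_4(K_+ \# -K_-) \leq 1$, and the first hypothesis then gives $\nu(K_+) - \nu(K_-) = \nu(K_+ \# -K_-) \leq 1$. The lower bound amounts to showing that any knot $L$ admitting an immersed disk with a single positive clasp (equivalently, becoming smoothly slice after a single positive-to-negative crossing change) satisfies $\nu(L) \geq 0$; this is where the existence of $J'$ enters essentially. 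The plan is to combine $L$ with a suitable sum involving $\pm J'$ and exploit $\nu(J')=1$ together with the first hypothesis to force $\nu(L) \geq 0$.

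Once the sign-refined inequality is established, the conclusion follows by iteration. Pick a sequence of concordances and crossing changes that takes $J$ to the unknot, realizing $c_{4,+}(J) = n_+$ positive-to-negative and $c_{4,-}(J) = n_-$ negative-to-positive changes. Concordances preserve $\nu$; each positive-to-negative step alters $\nu$ by an amount in $[-1,0]$, and each negative-to-positive step alters it by an amount in $[0,1]$. Summing, the total change $-\nu(J)$ lies in $[-n_+, n_-]$, which simultaneously yields $\nu(J) \leq n_+ = c_{4,+}(J)$ and $-\nu(J) \leq n_- = c_{4,-}(J)$.

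The main obstacle I expect is the lower bound in the sign-refined crossing change inequality. The upper bound and the iteration use only the hypothesis $\nu \leq g_4$ and the homomorphism property, whereas isolating the correct \emph{sign} requires breaking the symmetry between positive-to-negative and negative-to-positive crossing changes, and this is precisely where the witness knot $J'$ with $\nu(J')=1$ is essential.
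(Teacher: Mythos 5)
Your overall skeleton is the standard one (it is essentially the route of the references the paper cites for this lemma, which is stated there without proof): the first inequality from applying $\nu\leq g_4$ to $J$ and $-J$, the third from the second applied to $-J$, the upper half of the crossing-change inequality from the genus-one cobordism, and the final bookkeeping over a minimizing sequence are all correct. But the heart of the lemma --- the lower bound $\nu(K_+)-\nu(K_-)\geq 0$, equivalently ``$\nu(L)\geq 0$ whenever $L$ becomes slice after a single positive-to-negative crossing change'' --- is exactly the step you do not prove: you only announce ``the plan is to combine $L$ with a suitable sum involving $\pm J'$.'' This is a genuine gap, not a routine verification, because every tool you have assembled up to that point (additivity of $\nu$, $\nu\leq g_4$, and ``each crossing change gives a genus-one cobordism'') is blind to the sign of a crossing change: it is invariant under mirroring, so no formal combination of these facts, even after stabilizing with many copies of $L$ and $\pm J'$, can output a sign-refined inequality.

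What is missing is a genuinely sign-sensitive geometric ingredient. Concretely, the way the $J'$-trick is made to work is via a cancellation statement of the following type: if a knot $A$ can be turned into a slice knot by a negative-to-positive crossing change and a knot $B$ by a positive-to-negative crossing change, then $g_4(A\# B)\leq 1$; in other words, a cancelling pair of crossing changes (one clasp of each sign) can be traded for a single genus, rather than the genus two that resolving each clasp separately would give. Granting this, take $A=-L$ and $B=J'$: then $g_4(-L\# J')\leq 1$, so $-\nu(L)+\nu(J')=\nu(-L\# J')\leq 1$ and $\nu(J')=1$ forces $\nu(L)\geq 0$, which is precisely your missing lower bound (and note that, by your own reduction, the general case $K_\pm$ is the case $L=K_+\#-K_-$, so the opposite-sign pair always lands in different connected summands). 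Without proving, or at least correctly citing, this cobordism lemma, your argument does not close: the ``suitable sum involving $\pm J'$'' has $g_4\leq 2$ by the naive count, which only reproduces the symmetric bound $|\nu(K_+)-\nu(K_-)|\leq 1$ you already had. So the proposal should be completed by an explicit proof (or reference) that one positive-to-negative together with one negative-to-positive crossing change can be realized by a genus-one cobordism.
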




For explicit examples of $\nu$ and their treatment via this approach, we point to \cite[Cor.~3]{Livingston_Comp} for $\nu(J)=\tau(J)$, \cite[Lem.~11 and~17]{Feller_14_GordianAdjacency} for $\nu(J)=-\sigma_{\omega}(J)/2$ when $\omega$ is regular, and \cite[Thm.~13.1]{Livingston_Upsilon} for $\nu(J)=-\Upsilon_J(t)/t$.
Here, $\omega$ is \emph{regular} if $\omega \in S^1 \smallsetminus \{ 1\} $ and  $f(\omega)\neq0$ for all integer coefficient Laurent polynomials $f$ with $f(t)=f(t^{-1})$ and $f(1)=1$.

We note that the above definitions and Lemma~\ref{lem:nu_bounds_c_4} also work in the topologically locally-flat category by replacing $\mathcal{C}$ with the topological concordance group and $g_4$, $c_{4,+}$, $c_{4,-}$, and $c_{4}$ with $g_4^\mathrm{top}$, $c_{4,+}^\mathrm{top}$, $c_{4,-}^\mathrm{top}$, and $c_{4}^\mathrm{top}$. In particular, for $\nu(J)=-\sigma_{\omega}(J)/2$ when $\omega$ is regular, \eqref{eq:nu bounds c_4} holds in the locally-flat category.

\begin{Warning}
We note the following subtlety. Above we gave two equivalent definitions of $c_4$ (and implicitly $c_{4,\pm}$) in the smooth category. It is tempting to speculate that the same equivalence of definitions holds in the locally-flat category. However, the authors are not aware of a proof of this, hence
Lemma~\ref{lem:nu_bounds_c_4} is to be read with $c_{4}^\mathrm{top}$, $c_{4,+}^\mathrm{top}$, and $c_{4,-}^\mathrm{top}$ defined using sequences of concordances and crossing changes. However, from Lemma~\ref{lem:sigbounds} below, we know that the lower bound~\eqref{eq:nu bounds c_4} for $\nu(J)=-\sigma_{\omega}(J)/2$ with $\omega$ regular
holds also when defining $c_{4}^\mathrm{top}(J)$, $c_{4,+}^\mathrm{top}(J)$, and $c_{4,-}^\mathrm{top}(J)$ 
via counting double points in locally-flat normally immersed disks filling $J$. Thus, 
Theorem~\ref{thm:claspnrvsg4fortorusknots} 
also holds for this definition of~$c_{4}^\mathrm{top}$.
\end{Warning}

We formulate the following lemma for locally-flat immersed surfaces bounding links since we believe this to be of independent interest; however, for the use in this text it suffices to consider the case of $L$ being a knot 
and $F$ being a disk in the statement (in particular, $|\eta_{\omega}(L)-b_0(F)+1|=b_1(F)=0$).
For $\omega\in S^1$, we denote by $\sigma_\omega(L)$ and $\eta_\omega(L)$ the signature and the nullity, respectively, of
$(1-\omega)M+(1-\bar\omega)M^\textrm{transpose}$, where $M$ is a Seifert matrix for $L$.
\begin{lemma}\label{lem:sigbounds}
Let $\phi\colon F\to B^4$ be a locally-flat normally immersed proper compact 
surface with $p$ positive double points and $n$ negative double points, and let $L$ be the link $\phi(\partial F)\subset S^3$. Then, for all regular
$\omega$, we have
\[\sigma_{\omega}(L)+|\eta_{\omega}(L)-b_0(F)+1|\leq b_1(F)+2n.\]
\end{lemma}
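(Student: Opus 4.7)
My plan is to reduce the statement to the classical embedded Tristram--Levine inequality: for any locally-flat properly embedded compact surface $F'\subset B^4$ bounding a link $L'$ and any regular $\omega$,
\[|\sigma_\omega(L')|+|\eta_\omega(L')-b_0(F')+1|\le b_1(F').\]

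By a standard Morse-theoretic decomposition, the immersed surface $\phi\colon F\to B^4$ factors as a singular concordance from $L$ to some link $\widehat F$-boundary $\widehat L$---consisting of $p$ positive-to-negative and $n$ negative-to-positive crossing changes, one per double point with matching sign---followed by a locally-flat embedded surface $\widehat F\subset B^4$ bounding $\widehat L$ with $b_0(\widehat F)=b_0(F)$ and $b_1(\widehat F)=b_1(F)$. Applying the embedded inequality to $\widehat F$ yields
\[\sigma_\omega(\widehat L)+|\eta_\omega(\widehat L)-b_0(F)+1|\le b_1(F).\]

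The proof is then completed by invoking the sharp classical crossing-change estimate: at regular $\omega$, the quantity $Q(\cdot):=\sigma_\omega(\cdot)+|\eta_\omega(\cdot)-c+1|$ (for any fixed constant $c$) is non-decreasing under positive-to-negative crossing changes and decreases by at most $2$ under each negative-to-positive crossing change. Summing over the $p$ positive-to-negative and $n$ negative-to-positive crossing changes going from $L$ to $\widehat L$ gives $Q(L)\le Q(\widehat L)+2n$, and combining with the embedded bound yields
\[\sigma_\omega(L)+|\eta_\omega(L)-b_0(F)+1|\le b_1(F)+2n,\]
as claimed.

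The main technical obstacle is the sharp crossing-change estimate for the combination $Q$, which is what produces the asymmetric $2n$ (rather than $2(n+p)$) on the right-hand side. At regular $\omega$, a crossing change modifies the Hermitian Seifert form $(1-\omega)V+(1-\bar\omega)V^T$ by a rank-$1$ positive or negative semidefinite perturbation whose sign matches the crossing sign, and the asserted one-sided monotonicity of $Q$ then follows by a careful case analysis based on eigenvalue interlacing for rank-$1$ perturbations. The Morse-theoretic decomposition of $\phi$ into embedded concordance plus crossing changes is standard in the smooth category; its realization in the locally-flat setting relies on topological transversality.
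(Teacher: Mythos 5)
Your reduction has a genuine gap at its first step. You factor the locally-flat normally immersed surface as a stack of crossing-change traces (one per double point) followed by a locally-flat \emph{embedded} surface. In the smooth category such a normal form is standard (for disks it is \cite[Prop.~2.1]{Owens-Strle:2016-1}), but in the locally-flat category it is exactly the statement that the Warning in Section~\ref{sec:interlude} flags as unknown: the authors state they are not aware of a proof that a locally-flat immersed disk can be traded for a sequence of concordances and crossing changes, and Lemma~\ref{lem:sigbounds} exists precisely to bypass this. Topological transversality gives you normal immersions, but it does not give the Morse-position/level decomposition you invoke; there is no known argument putting a locally flat immersed surface in $B^4$ into the form ``crossing-change traces stacked on an embedded surface''. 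So your argument either silently assumes an unproven statement or only establishes the smooth version of the lemma, which is not enough for the intended application to $c_4^{\mathrm{top}}$. (A secondary point: your ``sharp crossing-change estimate'' for $Q=\sigma_\omega+|\eta_\omega-c+1|$ is asserted for links without proof; since the nullity of a link can jump under a crossing change even at regular $\omega$, the one-sided monotonicity needs an argument, though this part looks repairable.)

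For comparison, the paper avoids any such normal form by a purely local excision: around each double point take a small ball meeting $\phi(F)$ in two transverse disks, join it to the boundary by an arc in $\phi(F)$, and delete the balls together with neighborhoods of the arcs. This produces a locally-flat \emph{embedded} surface $F'$ with $b_0(F')=b_0(F)$ and $b_1(F')=b_1(F)+p+n$, whose boundary is $L$ connect-summed with $n$ positive and $p$ negative Hopf links. Applying the embedded Tristram--Levine bound \eqref{eq:TL-bound} to $F'$ and using additivity of $\sigma_\omega$ and $\eta_\omega$ under these connected sums (with $\sigma_\omega(H^\pm)=\mp1$, $\eta_\omega(H^\pm)=0$) yields the asymmetric bound $b_1(F)+2n$ directly, entirely within the locally-flat category. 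To salvage your route you would first have to prove the topological decomposition itself---which would in particular settle the equivalence of definitions of $c_4^{\mathrm{top}}$ discussed in the Warning---so the excision argument is the substantially easier and, at present, the only available one.
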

We derive Lemma~\ref{lem:sigbounds} from the Tristram-Levine bound:
\begin{equation}\label{eq:TL-bound}
|\sigma_\omega(L)|+|\eta_\omega(L)-b_0(F')+1|\leq b_1(F')\quad\text{(\cite{Tristram:1969-1,Levine:1969-1,KauffmanTaylor,Powell_17_fourgenus,ConwayNagelToffoli_17})},
\end{equation}
for every locally-flat embedded proper compact surface $F'$ with boundary the link $L\subset S^3$.
\begin{proof}[Proof of Lemma~\ref{lem:sigbounds}]
Without loss of generality we assume $F$ has no closed components (otherwise, consider the restriction of $\phi$ to the union of the non-closed components of~$F$).

We claim that there exists a locally-flat proper embedding of $F'=F\smallsetminus \{p+n\text{ open disks}\}$ into $B^4$ with boundary a link $L'$ that is the union of $L$ and $p+n$ meridians, $p$ of which are link positively with $L$ and $n$ of which link negatively with $L$. To see this claim, take $4$-balls $N_i$ around double points of $\phi(F)$ such that $(N_i,N_i\cap\phi(F))$ is  homeomorphic to $(B^4,B^4\cap \{(x,y)\in B^4\subset \C^2\mid xy=0\})$ preserving orientations, where the disk $B^4\cap \{(x,y)\mid x=0\}$ carries the orientation induced by the complex orientation and $B^4\cap \{(x,y)\mid y=0\}$ carries the induced complex and anti-complex orientation for a positive and negative double point, respectively. Choose $p+n$ properly embedded pairwise disjoint arcs $\alpha_i$ in $\phi(F)\smallsetminus N_1^\circ\cup\cdots\cup N_{p+n}^\circ$ such that $\alpha_i$ has one endpoint on $L$ and the other on $N_i$. Letting $N$ be the union of the $N_i$ and regular neighborhoods of the $\alpha_i$, we observe that the pair
$(B^4\smallsetminus N^\circ, \phi(F)\smallsetminus N^\circ)$ is homeomorphic to $(B_4, F')$ preserving orientations, where $F'$ is a locally-flat properly embedded surface with boundary a link $L'$ as claimed. In particular, $b_0(F')=b_0(F)$ and $b_1(F')=b_1(F)+p+n$.

We note that $L'$ arises as a $p+n$ fold connected sum of $L$ with $n$ positive Hopf links $H^+$ and $p$ negative Hopf links $H^-$. Hence, by additivity of signature and nullity using that $\sigma_\omega(H^\pm)=\mp 1$ and $\eta_\omega(H^\pm)=0$, we find
\begin{equation}\label{eq:LandL'}
\sigma_\omega(L')=\sigma_\omega(L)-n+p\et \eta_\omega(L')=\eta_\omega(L),
\end{equation}
 where $\eta_\omega$ and $\sigma_\omega$ denote the nullity and signature, respectively, for every $\omega\in S^1\smallsetminus \{1\}$.

We conclude the proof by calculating 
that, for every regular $\omega$, we have
\[\begin{array}{lcl}\sigma_\omega(L)+|\eta_\omega(L)-b_0(F)+1|
&\overset{\eqref{eq:LandL'}}{=}&\sigma_\omega(L')-p+n+|\eta_\omega(L')-b_0(F)+1|\\
&\overset{b_0(F')=b_0(F)}{=}&\sigma_\omega(L')+|\eta_\omega(L')-b_0(F')+1|-p+n\\
&\overset{\eqref{eq:TL-bound}}{\leq}&b_1(F')-p+n\\
&\overset{b_1(F')=b_1(F)+p+n}{=}&b_1(F)+2n.\hfill\qedhere\end{array}\]
\end{proof}

\section{Theorem~\ref{thm:claspnrvsg4fortorusknots} via signature calculations and crossing changes}\label{sec:ProofviaSig}
We start by making the family of pairs of torus knots from Theorem~\ref{thm:claspnrvsg4fortorusknots} explicit.
Recall that we write $K\coloneqq T_{p,q}\#-T_{p',q'}$.
Pairs of torus knots $\{ T_{p,q}, T_{p',q'}\}$ with Gordian distance two or more and $g_4(K)=1$ are among the following 
\begin{enumerate}[font=\upshape]
\item [(I)]\label{item:I} $\{ T_{3n+1,9n+6}, T_{3n+2,9n+3}\}$ for $n\geq1$,
\item [(II)]\label{item:II}$\{ T_{2n+1,4n+6}, T_{2n+3,4n+2}\}$ for $n\geq1$, or
\item [(III)]\label{item:III}$\{ T_{2,11}, T_{3,7}\}$,
$\{ T_{2,13}, T_{3,8}\}$,
$\{ T_{2,7}, T_{3,4}\}$,
$\{ T_{2,9}, T_{3,5}\}$,
$\{ T_{2,11}, T_{4,5}\}$,
$\{ T_{3,7}, T_{4,5}\}$,\newline
$\{ T_{3,10}, T_{4,7}\}$,
$\{ T_{4,9}, T_{5,7}\}$,
$\{ T_{3,14}, T_{5,8}\}$.
\end{enumerate}
In fact, all these pairs have Gordian distance two or more, and, except for $\{ T_{3,14}, T_{5,8}\}$, all these pairs are known to satisfy $g_4(K)=1$. See~\cite[Thm.~1.2 and Cor.~1.3]{FellerPark}.

The goal of this section is to prove Theorem~\ref{thm:claspnrvsg4fortorusknots}. The following lower bound will be used and it follows from Section~\ref{sec:interlude}.
\begin{corollary}\label{cor:signautrebound} If $J$ is a knot in $S^3$, then
\[\pushQED{\qed} c_4^\mathrm{top}(J) \geq \max_{\omega\text{ regular}}{\sigma_\omega}(K) + \max_{\omega\text{ regular}}{-\sigma_\omega}(K).\qedhere\]
\end{corollary}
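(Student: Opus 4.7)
My plan is to apply Lemma \ref{lem:nu_bounds_c_4} in the topological locally-flat category to the family of concordance homomorphisms $\nu_\omega(J) := -\sigma_\omega(J)/2$ indexed by regular $\omega \in S^1$. The first step is to verify the hypotheses of Lemma \ref{lem:nu_bounds_c_4} for each $\nu_\omega$: additivity of Tristram-Levine signatures under connected sum is standard; the bound $\nu_\omega(J) \leq g_4^\mathrm{top}(J)$ is exactly the Tristram-Levine inequality \eqref{eq:TL-bound} applied to a locally-flat slice disk; and the right-handed trefoil $T_{2,3}$, which admits a single crossing change to the unknot and satisfies $\nu_\omega(T_{2,3}) = 1$ for $\omega$ close to $-1$, provides the witness knot $J'$. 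These checks are essentially the content of~\cite[Lem.~11 and~17]{Feller_14_GordianAdjacency}, already recorded in Section~\ref{sec:interlude}.

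With the hypotheses in hand, Lemma \ref{lem:nu_bounds_c_4} yields, for every regular $\omega$, the two inequalities
\[-\sigma_\omega(J)/2 \leq c_{4,+}^\mathrm{top}(J) \et \sigma_\omega(J)/2 \leq c_{4,-}^\mathrm{top}(J).\]
The function $\omega \mapsto \sigma_\omega(J)$ is piecewise constant with finitely many jumps on the set of regular $\omega$, so both suprema on the left are in fact attained. Taking the maximum independently in each inequality, summing them, and invoking \eqref{eq:c+c-leqc} produces
\[\tfrac{1}{2}\bigl(\max_{\omega\text{ regular}}\sigma_\omega(J) + \max_{\omega\text{ regular}}(-\sigma_\omega(J))\bigr) \leq c_{4,+}^\mathrm{top}(J) + c_{4,-}^\mathrm{top}(J) \leq c_4^\mathrm{top}(J),\]
which yields the claimed bound (modulo the normalization convention for $\sigma_\omega$).

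No step is a genuine obstacle; the argument is a direct combination of Lemma \ref{lem:nu_bounds_c_4}, the splitting \eqref{eq:c+c-leqc}, and the topological concordance invariance of Tristram-Levine signatures. The only point worth flagging is that Lemma \ref{lem:nu_bounds_c_4} must be applied in the locally-flat category with $c_{4,\pm}^\mathrm{top}$ interpreted via sequences of concordances and crossing changes; this is precisely what the discussion surrounding the Warning and the signature bound of Lemma \ref{lem:sigbounds} guarantees.
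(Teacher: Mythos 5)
Your proposal is correct and is exactly the paper's route: the corollary carries no separate proof precisely because it is Lemma~\ref{lem:nu_bounds_c_4} applied in the locally-flat category to $\nu=-\sigma_\omega(\cdot)/2$ for regular $\omega$, with the two resulting inequalities maximized, summed, and combined with \eqref{eq:c+c-leqc}; note also that the factor-$\tfrac12$ form you obtain is the normalization actually used afterwards (it is what yields $c_4^{\mathrm{top}}(\#^nK)\geq 2n$ from $\max_\omega\sigma_\omega(K)=\max_\omega(-\sigma_\omega(K))=2$, consistent with the equality $c_4(\#^nK)=2n$ in the `furthermore' part). One small inaccuracy: the right-handed trefoil witnesses the second hypothesis of Lemma~\ref{lem:nu_bounds_c_4} only for those regular $\omega=e^{2\pi i t}$ with $\sigma_\omega(T_{2,3})=-2$ (roughly $t\in(\tfrac16,\tfrac56)$), so for $\omega$ close to $1$ one must instead use positive twist knots with sufficiently many twists as the witness $J'$; this is what the cited \cite[Lem.~11 and~17]{Feller_14_GordianAdjacency} supplies, so your deferral to that reference covers it.
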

\noindent The upper bound will be achieved by explicit constructions.

\subsection{Proof of Theorem~\ref{thm:claspnrvsg4fortorusknots}}
In this subsection, we provide the proof of Theorem~\ref{thm:claspnrvsg4fortorusknots} except the necessary signature calculations. The latter are provided in the next subsection.
\begin{proof}[Proof of Theorem~\ref{thm:claspnrvsg4fortorusknots}]Let $K=T_{p,q}\#-T_{p',q'}$ be such that $g_4(K)=1$ and there does not exist a crossing change turning $T_{p,q}$ into $T_{p',q'}$; in particular,
$\{T_{p,q}, T_{p',q'}\}$ must be among (I), (II), or~(III).
We establish in Examples~\ref{ex:siginfinitelist1}, \ref{ex:siginfinitelist2}, and~\ref{ex:sigfinitelist} below that
\begin{equation}\label{eq:maxsig2}
\max_{\omega\text{ regular}}{\sigma_\omega}(K)=2 \et \max_{\omega\text{ regular}}{-\sigma_\omega}(K)=2.
\end{equation}
Hence, by Corollary~\ref{cor:signautrebound} and the additivity of the Tristram-Levine signatures, we have $$c_4^\mathrm{top}(\#^n K) \geq 2n.$$ Moreover, for all integers $n\geq1$, $$g_4(\#^nK)=g_4^{\mathrm{top}}(\#^nK)=n,$$ since all these quantities are at most $n$ by $g_4(K)=1$, and the lower bound comes from the Tristram-Levine signatures bound~\eqref{eq:TL-bound}.

%


It remains to discuss the `furthermore'-paragraph. It suffices to show that for the pairs in question, $T_{p,q}$ can be turned into $T_{p',q'}$ by two crossing changes (necessarily of opposite sign by~\eqref{eq:nu bounds c_4}). Since then, $\#^nK$ can be turned into a slice knot with $2n$ crossing changes, $n$ of each sign, for all integers $n\geq1$; hence, $$2n\geq c_4(\#^nK)\geq c_4^{\mathrm{top}}(\#^nK).$$

For the pairs $\{ T_{2,7}, T_{3,4}\}$ and $\{ T_{2,9}, T_{3,5}\}$ such crossing changes are available in the literature (see e.g.\ \cite{Feller_14_GordianAdjacency}). We end the proof with an explicit sequence of two crossing changes and isotopies turning $T_{4,5}$ into $T_{3,7}$; see Figure~\ref{fig:45to37}.
\begin{figure}[h]
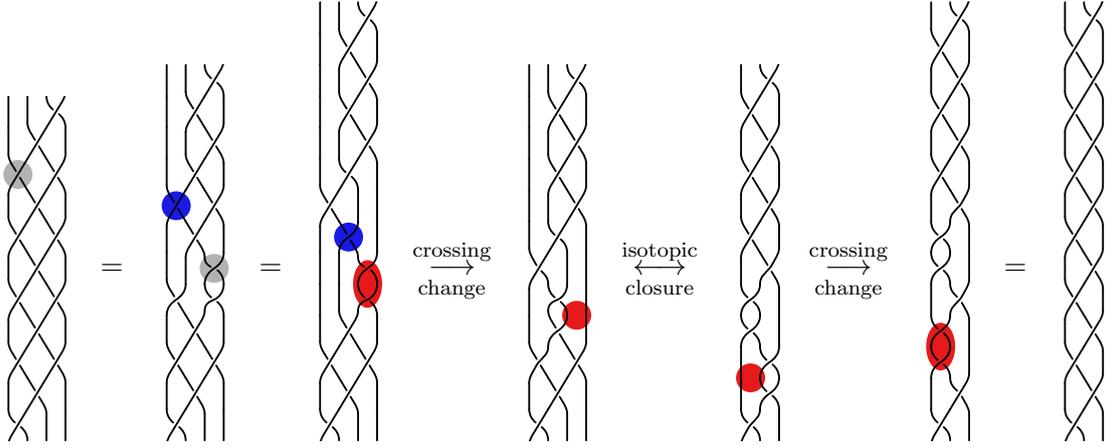

\centering
\begin{asy}
pen g = gray(0.7);
filldraw(myt * shift(1.5, 9 + 1.5) * inverse(myt) * scale(0.8) * unitcircle, g, white);
drawbraid("abcabcabcabcabc");
\end{asy}
\,\quad\raisebox{2.2cm}{$=$}
\begin{asy}
pen g = gray(0.7);
filldraw(myt * shift(3.5, 6 + 1.5) * inverse(myt) * scale(0.8) * unitcircle, g, white);
pen b = rgb(0.1, 0.1, 0.9);
filldraw(myt * shift(1.5, 8 + 1.5) * inverse(myt) * scale(0.8) * unitcircle, b, white);
drawbraid("abcabccabcabcbc");
\end{asy}
\quad\raisebox{2.2cm}{$=$}
\begin{asy}
pen b = rgb(0.1, 0.1, 0.9);
filldraw(myt * shift(2.5, 7 + 1.5) * inverse(myt) * scale(0.8) * unitcircle, b, white);
pen r = rgb(0.9, 0.1, 0.1);
filldraw(myt * shift(3.5, 5.5 + 1.5) * inverse(myt) * scale(0.8,1.3) * unitcircle, r, white);
drawbraid("abcabccbabcbcbc");
\end{asy}
\quad\raisebox{2.2cm}{$\overset{\text{crossing}}{\underset{\text{change}}{\longrightarrow}}$} 
\begin{asy}
pen r = rgb(0.9, 0.1, 0.1);
filldraw(myt * shift(3.5, 4.5 + 1.5) * inverse(myt) * scale(0.8) * unitcircle, r, white);
drawbraid("abcabbabcbcbc");
\end{asy}
\quad\raisebox{2.2cm}{$\overset{\text{isotopic}}{\underset{\text{closure}}{\longleftrightarrow}}$}
\begin{asy}
pen r = rgb(0.9, 0.1, 0.1);
filldraw(myt * shift(1.5, 2.5 + 1.5) * inverse(myt) * scale(0.8) * unitcircle, r, white);
drawbraid("abbaabababab");
\end{asy}
\quad\raisebox{2.2cm}{$\overset{\text{crossing}}{\underset{\text{change}}{\longrightarrow}}$}
\begin{asy}
pen r = rgb(0.9, 0.1, 0.1);
filldraw(myt * shift(1.5, 3.5 + 1.5) * inverse(myt) * scale(0.8,1.3) * unitcircle, r, white);
drawbraid("abaabaabababab");
\end{asy}
\,\quad\raisebox{2.2cm}{$=$}
\begin{asy}
drawbraid("ababababababab");
\end{asy}
\captionof{figure}{$T_{4,5}$ (left, as the closure of a $4$-braid) can be turned into $T_{3,7}$ (right, as the closure of a $3$-braid) by two crossing changes (modification in indicated 3-balls (red)). Also indicated ($=$) are braid isotopies (gray for the isotopy between the first and second braid, blue for the one between the second and third braid). Only the braids, rather than their closures, are drawn.}
\label{fig:45to37}
\end{figure}
 \end{proof}

\subsection{Tristram-Levine signatures calculation}
Denoting by $\sigma_t(J)=\sigma_{e^{2\pi i t}}(J)$ the Tristram-Levine signature for $\omega=e^{2\pi i t}\in S^1$ of a knot $J$~\cite{Tristram:1969-1,Levine:1969-1}, one has for every fixed knot an integer-valued piecewise linear function, which is constant in neighborhoods $t$ for which $\omega=e^{2\pi i t}$ is regular.
We define $t_0\in(0,1)$ to be a \emph{jump point} of the signature function of $J$, if the right limit $\lim_{t\to t^+}\sigma_t(J)$ differs from the left limit $\lim_{t\to t^-}\sigma_t(J)$. And say the \emph{jump} at $t_0\in(0,1)$ is $\lim_{t\to t^+}\sigma_t(J)-\lim_{t\to t^-}\sigma_t(J)$. For a more detailed discussion, and a complete description of what functions arise as signature functions, we refer to~\cite{Livingston_SigFuncs}.

For the proof of Theorem~\ref{thm:claspnrvsg4fortorusknots}, it remains to check~\eqref{eq:maxsig2} for the pairs from the families~(I), (II), and~(III).  This is done by using the signature formula going back to Hirzebruch and Brieskorn~\cite{Brieskorn_DifftopovonSing}\cite[Prop.~5.1]{GambaudoGhys_BraidsSignatures}, which we recall below in the proof of Lemma~\ref{lem:sigjumps}.
We focus on (I) and (II) since (III) consists of a small finite list of examples for which
~\eqref{eq:maxsig2} can be checked by hand or computer (see Example~\ref{ex:sigfinitelist}).
\begin{lemma}\label{lem:sigjumps}
Let $0<p<q$ be coprime integers.
The signature function
$\sigma_t(T_{p,q})$ is monotonically decreasing on $[0,\frac{p+q}{pq})$.
More precisely, the jump points on $[0,\frac{p+q}{pq})$ occur at $\{\frac{\ell}{pq}\mid p \nmid \ell \text{ and } q\nmid \ell\}$ and each jump is $-2$. 
Furthermore, the jump at $\frac{p+q}{pq}$ is $2$. In other words, for integers $\ell$ with $0\leq \ell<p+q$, we have
\[
\sigma_t(T_{p,q})=\left\{
\begin{array}{cl}
-2\left(l-\left\lfloor\tfrac{\ell}{q}\right\rfloor-\left\lfloor\tfrac{\ell}{p}\right\rfloor\right)&\text{for }t\in\left(\tfrac{\ell}{pq},\tfrac{\ell+1}{pq}\right),\vspace{.2cm}\\
-2\left(p+q-4-\left\lfloor\tfrac{q}{p}\right\rfloor\right)&\text{for }t\in\left(\tfrac{p+q}{pq},\tfrac{p+q+1}{pq}\right).\end{array}\right.\]
\end{lemma}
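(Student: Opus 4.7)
The plan is to apply the Brieskorn--Hirzebruch signature formula for torus knots \cite{Brieskorn_DifftopovonSing,GambaudoGhys_BraidsSignatures}, which realizes $\sigma_t(T_{p,q})$ as a signed count over the lattice points $(i,j)\in\{1,\dots,p-1\}\times\{1,\dots,q-1\}$ according to the position of $\tfrac{i}{p}+\tfrac{j}{q}$ relative to integer translates of $t$. In particular, $\sigma_t(T_{p,q})$ is piecewise constant and can only jump at $t_0=\tfrac{\ell}{pq}$ for which $e^{2\pi i\ell/(pq)}$ is a root of $\Delta_{T_{p,q}}(z)=\tfrac{(z^{pq}-1)(z-1)}{(z^p-1)(z^q-1)}$, i.e.~for $p\nmid\ell$ and $q\nmid\ell$; at each such $t_0$ the jump is $\pm 2$ times the number of lattice points $(i,j)$ whose value $\tfrac{i}{p}+\tfrac{j}{q}$ equals $t_0+k$ for some $k\in\Z$.

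The first main step is to locate the contributing lattice points for each $\ell\in\{1,\dots,p+q\}$ with $p\nmid\ell$ and $q\nmid\ell$. Since $iq+jp\geq p+q$ on the lattice, no lattice point satisfies $iq+jp=\ell$ when $\ell<p+q$, so the only relevant crossings for such $\ell$ come from the equation $iq+jp=\ell+pq$. Coprimality of $p$ and $q$ shows that the congruence $iq\equiv\ell\pmod p$ has a unique solution modulo $p$; the hypothesis $p\nmid\ell$ forces $i\in\{1,\dots,p-1\}$, and a short check using $\ell<p+q$ verifies that $j=(\ell+pq-iq)/p$ lies in $\{1,\dots,q-1\}$. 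Hence exactly one lattice point contributes at each such $\tfrac{\ell}{pq}$, and the Brieskorn formula will assign it a jump of $-2$ (the sign convention I would pin down by matching a single reference case, e.g.\ the trefoil $T_{2,3}$, whose jump at $t=1/6$ is known to be $-2$). For $\ell=p+q$, the new phenomenon is that the equation $iq+jp=\ell$ itself acquires the solution $(i,j)=(1,1)$; this crossing has the opposite sign character and thereby produces the $+2$ jump.

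With the jump structure in hand, monotone decrease on $[0,\tfrac{p+q}{pq})$ is immediate, and the explicit formula reduces to counting. On $(\tfrac{\ell}{pq},\tfrac{\ell+1}{pq})$ with $\ell<p+q$,
\[
\sigma_t(T_{p,q})=-2\,\#\{\ell'\in\{1,\dots,\ell\}:p\nmid\ell',\ q\nmid\ell'\}.
\]
Since $\ell<p+q\leq pq$, no integer in $\{1,\dots,\ell\}$ is divisible by both $p$ and $q$, so inclusion-exclusion yields $\ell-\lfloor\ell/p\rfloor-\lfloor\ell/q\rfloor$, as claimed. On $(\tfrac{p+q}{pq},\tfrac{p+q+1}{pq})$, I add the $+2$ jump at $\tfrac{p+q}{pq}$ to the value just before it and simplify using $\lfloor(p+q-1)/p\rfloor=1+\lfloor q/p\rfloor$ (via $p\nmid q$) and $\lfloor(p+q-1)/q\rfloor=1$ (via $p<q$), obtaining $-2(p+q-4-\lfloor q/p\rfloor)$.

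The main obstacle I anticipate is bookkeeping the signs in the Brieskorn--Hirzebruch formula carefully so that the $-2$ versus $+2$ distinction between $\ell<p+q$ and $\ell=p+q$ falls out as claimed; once the sign convention is correctly fixed, the remainder is routine lattice-point counting.
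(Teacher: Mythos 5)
Your route is essentially the paper's: both invoke the Hirzebruch--Brieskorn formula in the Gambaudo--Ghys form and reduce the jump analysis to locating lattice points $(i,j)\in\{1,\dots,p-1\}\times\{1,\dots,q-1\}$ with $iq+jp\in\{\ell,\ \ell+pq\}$ for $\ell\le p+q$, followed by the same inclusion-exclusion count; the paper packages the lattice analysis as the statement that for $0<\ell<pq$ exactly one of $\tfrac{\ell}{pq}$ and $1+\tfrac{\ell}{pq}$ lies in $S=\{\tfrac kp+\tfrac jq\}$ when $p\nmid\ell$ and $q\nmid\ell$, and neither does otherwise. Your verification for $\ell<p+q$ and your floor-function simplifications on the last interval are correct (note the check that $j\le q-1$ uses $q\nmid\ell$ as well as $\ell<p+q$).

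Two points are left dangling and need to be added. First, at $\ell=p+q$ you only exhibit the solution $(1,1)$ of $iq+jp=\ell$; for the jump there to be exactly $+2$ you must also check that $(1,1)$ is the unique such solution and, more importantly, that $iq+jp=p+q+pq$ has \emph{no} solution in the lattice box, since such a solution would contribute with the opposite sign and could cancel the jump. This is the paper's observation that $1+\tfrac{p+q}{pq}\notin S$, and it follows from your own mod-$p$ argument: $iq\equiv q \pmod p$ forces $i=1$, and then $j=q+1$ is out of range. Second, your plan to pin down signs by a single reference computation is not quite sufficient: the two types of crossings (a value $\tfrac ip+\tfrac jq$ equal to $t_0$ versus equal to $1+t_0$, i.e., the left versus the right endpoint of the window $[t,1+t]$ in the formula $-\sigma_t(T_{p,q})=\#\bigl(S\cap[t,1+t]\bigr)-\#\bigl(S\smallsetminus(t,1+t)\bigr)$) carry opposite signs, and the trefoil at $t=1/6$ calibrates only the $1+t_0$ type; either quote the formula precisely, as the paper does, or calibrate both signs, e.g., using the trefoil's jumps at $t=1/6$ and at $t=5/6$. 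With these additions your argument is complete and coincides with the paper's.
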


The content of Lemma~\ref{lem:sigjumps} might be well-known to experts. In particular, the first author previously discussed the monotonicity of the signature function of the torus knot $T_{p,q}$ on $[0,\frac{p+q}{pq})$ with Charles Livingston. For completeness, we provide a proof. 
\begin{proof}[Proof of Lemma~\ref{lem:sigjumps}] We recall the torus knot signature version of the Hirzebruch-Brieskorn signature formula~\cite{Brieskorn_DifftopovonSing} as explicitly stated in~\cite[Prop.~5.1]{GambaudoGhys_BraidsSignatures}.
For coprime integers $p,q>0$ and
\[S:=\{\tfrac{k}{p}+\tfrac{j}{q}\mid k,l\in\Z, 0<k<p \text{, and } 0<j<q\}\subset (0,2),\]
we have
\[-\sigma_{e^{2\pi i t}}(T_{p,q})=-\sigma_t(T_{p,q})=\# \{S\cap [t,1+t]\}-\#\{ S\smallsetminus (t,1+t)\}\quad \text{for all $t\in [0,1]$.}\]

The statements follow from the following 4 observations.
\begin{enumerate}
\item[i)] $\min S=\frac{p+q}{pq}$
\item[ii)] $1+\frac{p+q}{pq}\notin S$
\item[iii)] $1+\frac{\ell}{pq}\in S$ for all $0<\ell<p+q$ with $p\nmid \ell$ and $q\nmid \ell$.
\item[iv)] $\frac{\ell}{pq},1+\frac{\ell}{pq}\notin S$ for all $0<\ell<pq$ with $p\mid \ell$ or $q\mid \ell$.
\end{enumerate}
%
%


Note that i) follows immediately from the definition of $S$. Towards establishing ii)-iv), we will show that for $0<\ell<pq$, exactly one of $\frac{\ell}{pq}$ and $1+\frac{\ell}{pq}$ is in $S$ if $p\nmid \ell$ and $q\nmid \ell$, while neither is the case if $p\mid \ell$ or $q\mid \ell$. 

To see this, for $0<\ell<pq$, note that $\ell\equiv kq+jp$ mod $pq$ for some $0\leq k<p$ and $0\leq j<q$. Moreover, $p\nmid \ell$ and $q\nmid \ell$ if and only if $k\neq 0$ and $j\neq 0$, which implies iv). Since $0\leq kq+jp<2pq$, we see that $\ell$ is either $kq+jp$ or $kq+jp-pq$. If $\ell=kq+jp$ with $0< k<p$ and $0< j<q$, then $\frac{\ell}{pq} \in S$. For the sake of contradiction, assume $1+\frac{\ell}{pq} \in S$, then $pq + \ell = k'q+j'p$ for some $0< k'<p$ and $0< j'<q$. This implies that $(k'-k)q+(j'-j)p=pq$ and $p\mid (k'-k)$ and  $q\mid (j'-j)$. In particular, we have that $k=k'$ and $j=j'$, which lead us to a contradiction. A similar argument shows that if $\ell=kq+jp-pq$ with $0< k<p$ and $0< j<q$, then $1+\frac{\ell}{pq} \in S$ and $1+\frac{\ell}{pq} \notin S$.

Now, i) and iv) are already shown and we find ii) and~iii) easily as follows. ii) follows from i) and the observation above since $\frac{p+q}{pq}\in S$. iii) also  follows in a similar way since for $0<\ell<p+q$, we have that $\frac{\ell}{pq}\notin S$. 

We conclude the proof by noting that i)-iv) imply the result: iii) and iv) give that the jumps in $[0,\frac{p+q}{pq})$ are as claimed, while i) and ii) show that the jump at $\frac{p+q}{pq}$ is $2$.\end{proof}

%
%
%
%


\begin{corollary}
Let $0<p<q$ and $0<p'<q'$ be pairs of coprime integers such that $pq=p'q'$ and $p<p'$, then
\[\pushQED{\qed}
\sigma_t(T_{p,q}\#-T_{p',q'})=\left\{
\begin{array}{clr}
0&\text{for }t\in [0,\tfrac{1}{q}),\vspace{.1cm}\\
2&\text{for }t\in (\tfrac{1}{q},\tfrac{1}{q}+\tfrac{1}{pq}),\vspace{.1cm}\\
2\left(\left\lfloor\frac{p'+q'}{q}\right\rfloor+\left\lfloor\frac{p'+q'}{p}\right\rfloor-4-\left\lfloor\tfrac{q'}{p'}\right\rfloor\right)&\text{for }t\in (\tfrac{p'+q'}{qp},\tfrac{p'+q'+1}{qp}).\end{array}\right.\qedhere\]
\end{corollary}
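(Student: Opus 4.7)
The plan is to combine additivity of the Tristram-Levine signature under connected sum, namely $\sigma_t(T_{p,q}\#-T_{p',q'})=\sigma_t(T_{p,q})-\sigma_t(T_{p',q'})$, with the explicit formulas of Lemma~\ref{lem:sigjumps}. The hypothesis $pq=p'q'$ is what makes the bookkeeping tractable: both signature functions are constant on the common intervals $(\ell/(pq),(\ell+1)/(pq))$, and the value of their difference on any such interval is obtained by plugging $\ell$ into the relevant formula of Lemma~\ref{lem:sigjumps} for each torus knot and subtracting.

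Since $pq=p'q'$ and $p<p'$, one has the chain $p<p'\leq q'<q$. For every integer $\ell\in\{0,1,\dots,p-1\}$ (which together cover the interval $[0,1/q)$), all four floors $\lfloor\ell/p\rfloor$, $\lfloor\ell/q\rfloor$, $\lfloor\ell/p'\rfloor$, $\lfloor\ell/q'\rfloor$ vanish, so $\sigma_t(T_{p,q})=\sigma_t(T_{p',q'})=-2\ell$ and the difference is $0$; this yields the first case. For $\ell=p$ (covering the interval $(1/q,1/q+1/(pq))$), only $\lfloor p/p\rfloor=1$ is nonzero among the relevant floors ($\lfloor p/q\rfloor=\lfloor p/p'\rfloor=\lfloor p/q'\rfloor=0$ by the chain above), producing $\sigma_t(T_{p,q})=-2(p-1)$ and $\sigma_t(T_{p',q'})=-2p$, hence a difference of $2$; this yields the second case.

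For the third interval, the key step is the inequality $p'+q'<p+q$. Substituting $q=p'q'/p$ and clearing denominators, this is equivalent to $(p-p')(p-q')>0$, which holds since both factors are negative. Consequently, on $(\frac{p'+q'}{pq},\frac{p'+q'+1}{pq})$, the knot $T_{p,q}$ is still evaluated via the \emph{standard} branch of Lemma~\ref{lem:sigjumps} with $\ell=p'+q'$, yielding $-2(p'+q'-\lfloor(p'+q')/q\rfloor-\lfloor(p'+q')/p\rfloor)$, whereas $T_{p',q'}$ has just passed its jump-up point $\frac{p'+q'}{p'q'}=\frac{p'+q'}{pq}$ and is evaluated via the \emph{post-jump-up} branch $-2(p'+q'-4-\lfloor q'/p'\rfloor)$. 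Subtracting produces exactly the displayed formula.

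The argument is essentially a direct application of Lemma~\ref{lem:sigjumps}; the only substantive point to watch is that on the third interval the two torus knots fall on different branches of the signature formula, and the inequality $p'+q'<p+q$ is what guarantees this dichotomy. Verifying that inequality, and keeping careful track of which floors vanish on each interval, is the entirety of the work.
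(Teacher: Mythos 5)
Your derivation is correct and is exactly the argument the paper intends (the corollary is stated with a \qed as an immediate consequence of Lemma~\ref{lem:sigjumps}): use $\sigma_t(T_{p,q}\#-T_{p',q'})=\sigma_t(T_{p,q})-\sigma_t(T_{p',q'})$, note $p<p'\leq q'<q$ so the relevant floor terms vanish on the first two intervals, and use $p'+q'<p+q$ (equivalently $(p-p')(p-q')>0$) to see that on the third interval $T_{p,q}$ is still on the pre-jump branch while $T_{p',q'}$ is on the post-jump branch. No substantive difference from the paper's route.
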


%
%

\begin{example}\label{ex:siginfinitelist1}[Family (I)] Fix an integer $n\geq1$ and set $p=3n+1,q=9n+6,p'=3n+2,q'=9n+3$. 

\noindent If $t\in (\tfrac{1}{9n+6},\tfrac{1}{9n+6}+\tfrac{1}{3(3n+1)(3n+2)})$, then
$\sigma_t(T_{p,q}\#-T_{p',q'})=2.$

\noindent If $t\in (\tfrac{12n+5}{3(3n+1)(3n+2)},\tfrac{12n+6}{3(3n+1)(3n+2)})$, then 
\begin{align*}
\sigma_t(T_{p,q}\#-T_{p',q'})
&=2\left(\left\lfloor\tfrac{12n+5}{9n+6}\right\rfloor+\left\lfloor\tfrac{12n+5}{3n+1}\right\rfloor-4-\left\lfloor\tfrac{9n+3}{3n+2}\right\rfloor\right)\\
&=2\left(1+4-4-2\right)=-2.
\end{align*}
\end{example}

\begin{example}\label{ex:siginfinitelist2}[Family (II)]
Fix an integer $n\geq1$ and set $p=2n+1,q=4n+6,p'=2n+3,q'=4n+2$.

\noindent If $t\in (\tfrac{1}{4n+6},\tfrac{1}{4n+6}+\tfrac{1}{2(2n+1)(2n+3)})$, then
$\sigma_t(T_{p,q}\#-T_{p',q'})=2.$

\noindent If $t\in (\tfrac{6n+5}{2(2n+1)(2n+3)},\tfrac{6n+6}{2(2n+1)(2n+3)})$, then 
\begin{align*}
\sigma_t(T_{p,q}\#-T_{p',q'})
&=2\left(\left\lfloor\tfrac{6n+5}{4n+6}\right\rfloor+\left\lfloor\tfrac{6n+5}{2n+1}\right\rfloor-4-\left\lfloor\tfrac{4n+2}{2n+3}\right\rfloor\right)\\
&=2\left(1+3-4-1\right)=-2.
\end{align*}
\end{example}

\begin{example}\label{ex:sigfinitelist}[Family (III)] For all $K=T_{p,q}\#-T_{p',q'}$ from family (III), we provide some $\omega=e^{2\pi i t}$ 
for which the maximum $2$ and the minimum $-2$ of the Tristram-Levine signatures are realized.
Slightly more conceptually, we note that $pq>p'q'$ in all these examples, which immediately yields that $\displaystyle\min_{\omega\text{ regular}}{\sigma_\omega}(K)\leq-2$. The reader might argue for the maximum being $2$ using Lemma~\ref{lem:sigjumps}. However, this seems artificial given the availability of the signature functions, which in particular yield the following:
 \[\begin{array}{llcl}\textbf{$\{T_{2,11},T_{3,7}\}$:}& \sigma_t(K)=2 \quad\text{for } t\in (\tfrac{2}{21},\tfrac{3}{22}) &\text{and} & \sigma_t(K)=-2 \quad\text{for } t\in (\tfrac{1}{22},\tfrac{1}{21}),\vspace{0.1cm}\\

\textbf{$\{T_{2,13},T_{3,8}\}$:}& \sigma_t(K)=2 \quad\text{for } t\in (\tfrac{2}{26},\tfrac{3}{24}) &\text{and} & \sigma_t(K)=-2 \quad\text{for } t\in (\tfrac{1}{26},\tfrac{1}{24}),\vspace{0.1cm}\\

\textbf{$\{T_{2,7},T_{3,4}\}$:}& \sigma_t(K)=2 \quad\text{for } t\in (\tfrac{2}{12},\tfrac{3}{14}) &\text{and} & \sigma_t(K)=-2 \quad\text{for } t\in (\tfrac{1}{14},\tfrac{1}{12}),\vspace{0.1cm}\\

\textbf{$\{T_{2,9},T_{3,5}\}$:}& \sigma_t(K)=2 \quad\text{for } t\in (\tfrac{2}{15},\tfrac{3}{18}) &\text{and} & \sigma_t(K)=-2 \quad\text{for } t\in (\tfrac{1}{18},\tfrac{1}{15}),\vspace{0.1cm}\\

\textbf{$\{T_{2,11},T_{4,5}\}$:}& \sigma_t(K)=2 \quad\text{for } t\in (\tfrac{2}{20},\tfrac{3}{22}) &\text{and} & \sigma_t(K)=-2 \quad\text{for } t\in (\tfrac{1}{22},\tfrac{1}{20}),\vspace{0.1cm}\\

\textbf{$\{T_{3,7},T_{4,5}\}$:}& \sigma_t(K)=2 \quad\text{for } t\in (\tfrac{3}{20},\tfrac{4}{21}) &\text{and} & \sigma_t(K)=-2 \quad\text{for } t\in (\tfrac{1}{21},\tfrac{1}{20}),\vspace{0.1cm}\\

\textbf{$\{T_{3,10},T_{4,7}\}$:}& \sigma_t(K)=2 \quad\text{for } t\in (\tfrac{3}{28},\tfrac{4}{30}) &\text{and} & \sigma_t(K)=-2 \quad\text{for } t\in (\tfrac{1}{30},\tfrac{1}{28}),\vspace{0.1cm}\\

\textbf{$\{T_{4,9},T_{5,7}\}$:}& \sigma_t(K)=2 \quad\text{for } t\in (\tfrac{4}{35},\tfrac{5}{36}) &\text{and} & \sigma_t(K)=-2 \quad\text{for } t\in (\tfrac{1}{36},\tfrac{1}{35}), \vspace{0.1cm}\\

\textbf{$\{T_{3,14},T_{5,8}\}$:}& \sigma_t(K)=2 \quad\text{for } t\in(\tfrac{3}{40},\tfrac{4}{42}) &\text{and} & \sigma_t(K)=-2 \quad\text{for } t\in (\tfrac{1}{42},\tfrac{1}{40}).
\end{array}\]

\end{example}

\section{Theorem~\ref{thm:top} via Heegaard Floer invariants and cabling}
The goal of this section is to prove Theorem~\ref{thm:top}. We start with a proposition that summarizes the properties of the $\Upsilon$-invariant~\cite{OSS_2014} that we use in the proof.

\begin{proposition}\label{prop:upsilon_properties} For any knot $J$, the $\Upsilon$-invariant $\Upsilon_J$ is a piecewise linear function $\Upsilon_J: [0,2]\rightarrow \mathbb{R}$ with the following properties.
\begin{enumerate}[font=\upshape]
\item \cite[Cor.~1.12]{OSS_2014}\label{Upsilonadditive} $\Upsilon$ is a concordance invariant and, for all knots $J$ and $J'$,
$$\Upsilon_{J \# J'}(t) =\Upsilon_{J}(t) + \Upsilon_{J'}(t) .$$


\item \cite[Thm.~1.11]{OSS_2014}\label{Upsilongenus} For $0 < t \leq 1$, $\lvert \Upsilon_J(t)/t\rvert \leq  g_4(J).$

\item \cite[Thm.~1.14]{OSS_2014}\label{Upsilontorus} For a positive integer $i$,
\[\pushQED{\qed}
\Upsilon_{T_{2,2i+1}}(t)=
-i\cdot t \quad\mbox{for}\quad t\in [0,1].\qedhere\]

\end{enumerate}
\end{proposition}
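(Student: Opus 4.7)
The proposition is a repackaging of three established results from the foundational paper of Ozsv\'ath--Stipsicz--Szab\'o \cite{OSS_2014}, and my plan is to defer each item to the cited theorem; nonetheless, I will sketch the underlying ideas so that one sees why these are the natural inputs for Theorem~\ref{thm:top}.

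Recall that $\Upsilon_J(t)$ is defined via a one-parameter family of gradings on the knot Floer complex $CFK^\infty(J)$, interpolating between the Alexander and algebraic filtrations, and is read off as the minimum $t$-weighted grading of a cycle representing a distinguished generator of the associated homology. Part (1) then unpacks into two standard consequences of this definition: concordance invariance follows from the fact that concordant knots have filtered chain-homotopy equivalent knot Floer complexes up to acyclic summands, so the minimal-grading construction agrees on the nose; additivity under connected sum is an application of the K\"unneth-type identification $CFK^\infty(J\# J')\simeq CFK^\infty(J)\otimes_{\F[U]}CFK^\infty(J')$, together with the fact that the $t$-weighted grading is additive on tensor products.

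For part (2), a genus-$g$ smoothly embedded surface in $B^4$ bounded by $J$ produces a cobordism from $(S^3,J)$ to $(S^3,\text{unknot})$, and the induced cobordism map on knot Floer homology shifts the $t$-weighted grading by an amount controlled by $g$; chasing this bound through the definition of $\Upsilon$ yields $|\Upsilon_J(t)/t|\leq g_4(J)$ for $t\in(0,1]$.

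For part (3), the knot Floer complex of $T_{2,2i+1}$ is the well-known staircase complex with $i$ steps, each of width and height one. A direct computation on this explicit model, minimizing the $t$-weighted grading over cycle representatives of the distinguished generator, produces the linear function $-i\cdot t$ on $[0,1]$. The only item with nontrivial computational content is (3), and even there the staircase model reduces the problem to a finite one-variable optimization; since all three statements are standard and well-documented, I consider the direct appeal to \cite{OSS_2014} the cleanest proof.
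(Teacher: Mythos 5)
Your proposal matches the paper exactly: the paper offers no proof beyond citing \cite[Cor.~1.12, Thm.~1.11, Thm.~1.14]{OSS_2014}, which is also the core of your argument. Your supplementary sketches (K\"unneth additivity, the genus bound, the staircase computation for $T_{2,2i+1}$) are accurate background but not required, so the direct appeal to the cited results is indeed the intended and sufficient proof.
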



Proposition~\ref{prop:upsilon_properties} and Lemma~\ref{lem:nu_bounds_c_4} imply the following lower bound for the $4$-dimensional clasp number. 
Compare with~\cite[Thm.~13.1]{Livingston_Upsilon} for Lemma~\ref{lem:upsilonbound} and its proof via Lemma~\ref{lem:nu_bounds_c_4}, and compare with \cite[Prop.~2.1]{Juhasz-Zemke:2020-1}, where Lemma~\ref{lem:upsilonbound} is derived from the stronger bound using $\nu^+$ given in~\cite{Hom-Wu:2016-1, Bodnar-Celoria-Golla:2017-1}. More on $\nu^+$  below.

\begin{lemma}\label{lem:upsilonbound} If $J$ is a knot in $S^3$, then
\[\pushQED{\qed} c_4(J) \geq \max_{t\in(0,1]} \Upsilon_J(t)/t + \max_{t\in(0,1]} -\Upsilon_J(t)/t .\qedhere\]
\end{lemma}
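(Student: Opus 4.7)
The plan is to deduce Lemma~\ref{lem:upsilonbound} by applying Lemma~\ref{lem:nu_bounds_c_4} to the one-parameter family of concordance homomorphisms $\nu_t\colon \mathcal{C}\to\R$ defined by $\nu_t(J)=-\Upsilon_J(t)/t$, for each fixed $t\in(0,1]$, and then summing the resulting bounds via inequality~\eqref{eq:c+c-leqc}.

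First, I would verify the hypotheses of Lemma~\ref{lem:nu_bounds_c_4} for $\nu_t$. Additivity follows from Proposition~\ref{prop:upsilon_properties}\eqref{Upsilonadditive}, and the inequality $\nu_t(J)\leq g_4(J)$ is immediate from Proposition~\ref{prop:upsilon_properties}\eqref{Upsilongenus}, which gives $|\Upsilon_J(t)/t|\leq g_4(J)$. For the remaining hypothesis, I would take $J'=T_{2,3}$, the right-handed trefoil: by Proposition~\ref{prop:upsilon_properties}\eqref{Upsilontorus} with $i=1$ one has $\Upsilon_{T_{2,3}}(t)=-t$ on $[0,1]$, so $\nu_t(T_{2,3})=1$, and a single positive-to-negative crossing change turns $T_{2,3}$ into the unknot (which is trivially smoothly slice).

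Applying Lemma~\ref{lem:nu_bounds_c_4} therefore yields, for every $t\in(0,1]$ and every knot $J$,
\[
-\Upsilon_J(t)/t\leq c_{4,+}(J)\et \Upsilon_J(t)/t\leq c_{4,-}(J).
\]
Since the right-hand sides do not depend on $t$, I may take the supremum over $t\in(0,1]$ on the left (which is attained, since $\Upsilon_J$ is piecewise linear on a compact interval). Adding the two resulting inequalities and invoking~\eqref{eq:c+c-leqc} gives
\[
\max_{t\in(0,1]}\Upsilon_J(t)/t+\max_{t\in(0,1]}-\Upsilon_J(t)/t\leq c_{4,+}(J)+c_{4,-}(J)\leq c_4(J),
\]
which is the desired bound.

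There is no real obstacle: the content is already encoded in Section~\ref{sec:interlude} together with the properties of $\Upsilon$ collected in Proposition~\ref{prop:upsilon_properties}. The only conceptual point worth emphasizing (and the reason the bound is stronger than a naive application of $|\nu_t|\leq g_4$) is the asymmetric splitting of $c_4$ into $c_{4,+}+c_{4,-}$ provided by~\eqref{eq:c+c-leqc}; this is what allows the two separate maxima over $t$ to be added, rather than just bounding $c_4$ by $2\max_t|\Upsilon_J(t)/t|$.
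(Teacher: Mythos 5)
Your proposal is correct and is exactly the argument the paper intends: the lemma is stated there without a separate proof precisely because it follows from Lemma~\ref{lem:nu_bounds_c_4} applied to $\nu_t(J)=-\Upsilon_J(t)/t$ (with $T_{2,3}$ as the test knot, cf.\ \cite[Thm.~13.1]{Livingston_Upsilon}), combined with the splitting $c_{4,+}+c_{4,-}\leq c_4$ from~\eqref{eq:c+c-leqc}. Your verification of the hypotheses via Proposition~\ref{prop:upsilon_properties} and the remark on attaining the maxima are both accurate.
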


In \cite{Hom-Wu:2016-1}, Hom and Wu define a non-negative integer valued smooth concordance invariant $\nu^+$ for knots in $S^3$. Following \cite{Kim-Park:2018-1} (see also \cite{Hom:2017-1}), we say two knots $J$ and $J'$ are \emph{$\nu^+$-equivalent} if $$\nu^+(J \# - J') =\nu^+(J' \# - J)=0,$$ and it is straight forward to verify that this forms a equivalence relation on the set of concordance classes of knots. By \cite[Prop.~4.7]{OSS_2014}, we have that $\nu^+$-equivalent knots have the same $\Upsilon$-invariant. Recall that $D$ denotes the positive untwisted Whitehead double of the right-handed trefoil. The key fact that we will use is that $D$ and $T_{2,3}$ are $\nu^+$-equivalent \cite[Prop.~6.1]{Hedden-Kim-Livingston:2016-1}. Furthermore, we have the following proposition. Recall that $J_{p,q}$ denotes the $(p, q)$-cable of a knot $J$, where $p$ is the longitudinal winding.

\begin{proposition}\label{prop:nuplus} If $K_i = D_{2,2i+1} \# - T_{2,2i+1} \# -D$, then $K_i$ and $(T_{2,3})_{2,2i+1} \# - T_{2,2i+3}$ are $\nu^+$-equivalent.
\end{proposition}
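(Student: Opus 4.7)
The plan is to substitute $D$ by its $\nu^+$-equivalent $T_{2,3}$ wherever it occurs in the definition of $K_i$, and then simplify the resulting connected sum. The key input is the $\nu^+$-equivalence $D \sim_{\nu^+} T_{2,3}$ from \cite[Prop.~6.1]{Hedden-Kim-Livingston:2016-1}, which I will leverage through two properties of $\nu^+$-equivalence: invariance under cabling, and additivity with respect to connected sum (both standard since $\nu^+$ is a concordance invariant and $J\#-J$ is slice).

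Concretely, first apply that $\nu^+$-equivalence is preserved under cabling---a fact that can be derived from the cable formula for $CFK^\infty$ and is essentially implicit in the literature cited above---to conclude $D_{2,2i+1} \sim_{\nu^+} (T_{2,3})_{2,2i+1}$. Combined with additivity under connected sum, this yields
\[
K_i = D_{2,2i+1} \# -T_{2,2i+1} \# -D \sim_{\nu^+} (T_{2,3})_{2,2i+1} \# -T_{2,2i+1} \# -T_{2,3}.
\]
The remaining equivalence with $(T_{2,3})_{2,2i+1} \# -T_{2,2i+3}$ reduces, by cancellation in the $\nu^+$-equivalence group (valid because $J \# -J$ is slice and $\nu^+$ is a concordance invariant), to the torus-knot identity
\[
T_{2,2i+1} \# T_{2,3} \sim_{\nu^+} T_{2,2i+3}.
\]

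The main obstacle is establishing this last identity. Its $\Upsilon$-invariants agree on both sides (both yield $-(i+1)t$ for $t\in[0,1]$), which is consistent with $\nu^+$-equivalence but far from sufficient to imply it. Verifying the identity requires an explicit chain-level analysis of the knot Floer complexes involved---the tensor product of two staircases on one side, a single staircase on the other---exploiting the specific structure of the $(2,2k+1)$-torus-knot staircases. An alternative route is to bypass this reduction entirely by directly comparing $CFK^\infty$ of $K_i$ with that of the proposed simpler knot via a Hedden-type formula for $CFK^\infty$ of cables of Whitehead doubles, but in either case the core technical work is a $CFK^\infty$ computation on staircase complexes.
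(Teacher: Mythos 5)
Your route is the same as the paper's: replace $D$ by $T_{2,3}$ via \cite[Prop.~6.1]{Hedden-Kim-Livingston:2016-1}, push this equivalence through the $(2,2i+1)$-cable, use additivity/cancellation of $\nu^+$-equivalence under connected sum, and reduce to an identity among torus knots. One caveat on the cabling step: it is not merely ``essentially implicit in the literature'' --- it is exactly \cite[Thm.~B]{Kim-Park:2018-1}, which is what the paper invokes to conclude that $(T_{2,3})_{2,2i+1}$ and $D_{2,2i+1}$ are $\nu^+$-equivalent; deriving it from the $CFK^\infty$ cabling formula is genuinely nontrivial, so this should be a citation rather than an assertion.

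The genuine gap is your final step. You stop at the claim $T_{2,2i+1}\# T_{2,3}$ is $\nu^+$-equivalent to $T_{2,2i+3}$ and state that verifying it requires a fresh chain-level analysis of staircase tensor products (correctly noting that equality of $\Upsilon$ is not sufficient). But no new computation is needed: this is precisely \cite[Thm.~B.1]{Hedden-Kim-Livingston:2016-1}, which says that $\#^{n}T_{2,3}$ and $T_{2,2n+1}$ are $\nu^+$-equivalent; applying it with $n=i$ and $n=i+1$, together with the fact that $\nu^+$-equivalence is an equivalence relation compatible with connected sum (and with mirroring), e.g.\ \cite[Prop.~3.12]{Kim_Krcatovich-Park:2019-1}, closes the argument --- this is exactly how the paper finishes. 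As written, your attempt leaves its key step unproved, so it is incomplete; with that citation supplied, it coincides with the paper's proof.
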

\begin{proof} As mentioned above $D$ and $T_{2,3}$ are $\nu^+$-equivalent \cite[Prop.~6.1]{Hedden-Kim-Livingston:2016-1}. Furthermore, for positive integer $n$, we have that $\#^n T_{2,3}$ and $T_{2,2n+1}$ are also $\nu^+$-equivalent \cite[Thm.~B.1]{Hedden-Kim-Livingston:2016-1}.
Moreover, \cite[Thm.~B]{Kim-Park:2018-1} implies that $(T_{2,3})_{2,2i+1}$ and $D_{2,2i+1}$ are $\nu^+$-equivalent. Hence the proof is complete by noting that $\nu^+$-equivalence forms a equivalence relation on the set of concordance classes of knots and the fact that $K_1\# K_2$ and $J_1\# J_2$ are $\nu^+$-equivalent if $K_i$ and $J_i$ are $\nu^+$-equivalent for $i=0,1$ (see e.g.\ \cite[Prop.~3.12]{Kim_Krcatovich-Park:2019-1}).
\end{proof}

Finally, we compute the $\Upsilon$-invariant of $D_{2,2i+1} \# - T_{2,2i+1} \# -D$. By Proposition~\ref{prop:nuplus}, we only need to compute the $\Upsilon$-invariant of $(T_{2,3})_{2,2i+1} \# - T_{2,2i+3}$. Note that $(T_{2,3})_{2,2i+1}$ and $T_{2,2i+3}$ are both $L$-space knots and for each $L$-space knot $J$ there is a \emph{formal semigroup} $S_J$, a subset of $\Z_{\geq 0}$, associated to $J$~\cite{Wang:2018-1}. For positive integers $a_1, a_2, \ldots, a_\ell$, let
$$\langle a_1, a_2, \ldots, a_\ell \rangle \coloneqq \{c_1a_1+c_2a_2+\cdots+c_\ell a_\ell \mid c_i \in \Z_{\geq 0} \text{ for } i=1,2,\ldots, n \}.$$ For instance, the formal semigroup associated to a positive torus knot $T_{p,q}$ is $\langle p, q \rangle$. More generally, the formal semigroups of iterated cables of torus knots that are $L$-space knots can be computed. We only state the simplest case.

\begin{lemma}[{\cite[Prop.~2.7]{Wang:2018-1}}]\label{lem:semigroup} If $(T_{p,q})_{r,s}$ is an $L$-space knot, then the formal semigroup $S_{(T_{p,q})_{r,s}}$ for $(T_{p,q})_{r,s}$ is $\langle pr, qr, s\rangle$.\qed
\end{lemma}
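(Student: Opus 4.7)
My plan is to deduce the identity from the corresponding statement about Alexander polynomials. For an $L$-space knot $K$, the formal semigroup $S_K$ of~\cite{Wang:2018-1} is encoded by the Poincar\'e-series identity
\[
\sum_{n \in S_K} t^n \;=\; \frac{\Delta_K(t)}{1-t},
\]
where $\Delta_K$ is normalised to have constant term $1$ and only non-negative exponents. As a sanity check, $\Delta_{T_{a,b}}(t)/(1-t) = (1-t^{ab})/\bigl((1-t^a)(1-t^b)\bigr)$ expands to the Poincar\'e series of $\langle a,b\rangle$, recovering $S_{T_{a,b}} = \langle a,b\rangle$.

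First I would apply the cable formula $\Delta_{J_{r,s}}(t) = \Delta_J(t^r)\,\Delta_{T_{r,s}}(t)$ with $J = T_{p,q}$ and simplify to obtain
\[
\frac{\Delta_{(T_{p,q})_{r,s}}(t)}{1-t} \;=\; \frac{(1-t^{pqr})(1-t^{rs})}{(1-t^{pr})(1-t^{qr})(1-t^{s})}.
\]
Substituting $u = t^r$ in the torus-knot identity gives $(1-t^{pqr})/\bigl((1-t^{pr})(1-t^{qr})\bigr) = \sum_{m \in r\langle p,q\rangle} t^m$, while $(1-t^{rs})/(1-t^{s}) = \sum_{k=0}^{r-1} t^{ks}$; multiplying rewrites the right-hand side as $\sum_{m \in r\langle p,q\rangle}\sum_{k=0}^{r-1} t^{m+ks}$. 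The task then reduces to showing that the map $(m,k) \mapsto m + ks$ is a bijection onto $\langle pr, qr, s\rangle$. Injectivity is immediate from $\gcd(r,s) = 1$: the values $0, s, 2s, \ldots, (r-1)s$ are pairwise distinct modulo $r$ while $r\langle p,q\rangle \subset r\mathbb{Z}$, so $k$ is forced by the residue of $n$ modulo $r$; that each $m + ks$ lies in $\langle pr, qr, s\rangle$ is clear.

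The main obstacle is surjectivity onto $\langle pr, qr, s\rangle$, and this is where the $L$-space hypothesis must enter. Given $n = apr + bqr + cs \in \langle pr, qr, s\rangle$, write $c = r\gamma + k$ with $\gamma \geq 0$ and $0 \leq k < r$, so that $n - ks = r(ap + bq + \gamma s)$; to land in $r\langle p,q\rangle$ one needs $ap + bq + \gamma s \in \langle p, q\rangle$, which reduces to $s \in \langle p, q\rangle$. Here one invokes Hom's cabling criterion that $(T_{p,q})_{r,s}$ is an $L$-space knot only when $s/r$ is at least $2g(T_{p,q}) - 1 = pq - p - q$, the Frobenius number of $\langle p,q\rangle$; hence $s \in \langle p, q\rangle$ and the argument closes. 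Of these steps I expect the Poincar\'e-series reformulation of Wang's definition of $S_K$ to require the most care to extract cleanly from the source; the remainder is routine generating-function manipulation.
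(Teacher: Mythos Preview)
The paper does not prove this lemma at all: it is stated as a citation of \cite[Prop.~2.7]{Wang:2018-1} and closed with a \qed. So there is no ``paper's proof'' to compare with, and your plan is supplying an argument where the authors chose to defer to Wang.

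That said, your outline is essentially a correct proof. The generating-function manipulation is right: the cable formula and the torus-knot identity combine exactly as you wrote, and the injectivity argument via residues modulo $r$ is clean. One small tightening is warranted in the surjectivity step. From Hom's criterion you get $s/r \geq pq - p - q$, i.e.\ $s \geq r(pq-p-q)$; to conclude $s \in \langle p,q\rangle$ you need the strict inequality $s > pq-p-q$ (the Frobenius number itself is \emph{not} in the semigroup). This is fine once you note that a nontrivial cable has $r \geq 2$, so $s \geq 2(pq-p-q) > pq-p-q$ whenever $pq-p-q \geq 1$; the degenerate cases $r=1$ or $\min(p,q)=1$ reduce to torus knots and are immediate. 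You might make that explicit rather than leaving it implicit in ``hence $s \in \langle p,q\rangle$''.

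Your own flag about the Poincar\'e-series reformulation of $S_K$ is accurate: Wang defines the formal semigroup via the alternating exponents of the Alexander polynomial of an $L$-space knot, and the identity $\sum_{n\in S_K} t^n = \Delta_K(t)/(1-t)$ is the standard translation (the $\pm 1$ coefficient pattern for $L$-space knots makes the quotient a $0/1$ series). Once that is in hand, the rest is routine.
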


The $\Upsilon$-invariant of an $L$-space knot can be computed in terms of its formal semigroup (see also \cite[Prop.~4.4]{Borodzik-Livingston:2016-1} for algebraic knots).

\begin{lemma}[{\cite[Prop.~3.2]{Wang:2018-1}}]\label{lem:semigroupUpsilon} If $J$ is an $L$-space knot with $g_4(J)=g$ and $S_J$ is the formal semigroup associated to $J$, then 
\[\pushQED{\qed}
\Upsilon_J(t) = \max_{m\in \{0, \ldots 2g\}} \{-2 \#\left(S_J \cap [0,m) \right) +(m-g)\cdot t\}.\qedhere\]
\end{lemma}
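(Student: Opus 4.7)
The plan is to derive the formula from the staircase presentation of the knot Floer complex $CFK^\infty(J)$ of an $L$-space knot and the standard algorithm for computing $\Upsilon$ from such a staircase.

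First, invoke the Ozsv\'{a}th--Szab\'{o} classification: for any $L$-space knot $J$ with Seifert genus $g$, $CFK^\infty(J)$ admits a staircase model with exactly $2g+1$ generators, uniquely determined up to filtered chain homotopy by $\Delta_J(T)$. For $L$-space knots $\Delta_J(T)$ is in turn determined by the formal semigroup $S_J$ (via the identity $(1-T)\sum_{s\in S_J} T^s \equiv \Delta_J(T)$, understood as a generating-function identity with the appropriate truncation at degree $2g$ and symmetrization). Consequently, the bigraded coordinates of the staircase corners can be read off directly from $S_J$: after choosing the standard normalization, the $m$-th corner (for $m \in \{0, 1, \ldots, 2g\}$) has Alexander coordinate $A_m$ and algebraic coordinate $j_m$ expressible through the counting function $n_m := \#(S_J \cap [0, m))$. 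Concretely, the direction of the $m$-th step of the staircase is horizontal or vertical according to whether $m \in S_J$ or not, which pins down $(A_m, j_m)$ recursively in terms of $n_m$ and $m$.

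Second, apply the standard computation: the Upsilon invariant of a staircase equals $-2$ times the minimum of a $t$-weighted linear functional over its $2g+1$ corners, namely the functional $x_m \mapsto (1 - t/2)(-j_m) + (t/2)(g - A_m)$ (or an equivalent expression in the chosen convention). Substituting the semigroup expressions for $A_m$ and $j_m$ from the first step, the two linear pieces combine into a single affine function of $t$ with constant term $-2 n_m$ and slope $m - g$, so the optimization yields
\[
\Upsilon_J(t) = \max_{0 \leq m \leq 2g}\bigl\{-2 n_m + (m-g)\, t\bigr\},
\]
which is exactly the asserted identity.

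The main obstacle will be the bookkeeping in the first step: harmonizing the conventions for the formal semigroup, the normalized Alexander polynomial, and the bigrading on $CFK^\infty$ so that the $(A_m, j_m)$ coordinates come out cleanly as functions of $n_m$. The cleanest way to pin this down is to argue inductively in $m$, reading the sign of the $m$-th step from whether the coefficient of $T^m$ in $\Delta_J(T)$ flips, which is controlled by whether $m \in S_J$; checking the base case (and the symmetry constraint $\Upsilon_J(t) = \Upsilon_J(2-t)$, equivalently $n_{2g-m} = g - m + n_m$) on small examples like $T_{2,3}$ and $T_{3,4}$ provides reassurance. The $\Upsilon$-computation itself is then a short piece of linear algebra on a staircase, and the $L$-space hypothesis is what guarantees that no additional generators contribute to the minimization.
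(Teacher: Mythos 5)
This lemma is not proved in the paper at all: it is quoted from \cite[Prop.~3.2]{Wang:2018-1} (hence the \,$\square$\, placed directly in the statement), so there is no internal argument to compare with; your sketch essentially reconstructs the standard proof from the cited source, namely read the staircase of $CFK^\infty(J)$ off $\Delta_J$, hence off $S_J$, and then apply the known staircase computation of $\Upsilon$. Within that plan there are two inaccuracies to repair. First, the staircase of an $L$-space knot has one generator per nonzero coefficient of $\Delta_J$, which is in general strictly fewer than $2g+1$ (for $T_{3,4}$ there are five generators while $2g+1=7$); the correct bookkeeping is that the staircase is a lattice path consisting of $2g$ unit steps, the $m$-th step being horizontal or vertical according to whether $m\in S_J$, so the index $m\in\{0,\dots,2g\}$ in the lemma runs over the $2g+1$ lattice points of this path, not over generators or corners. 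Second, because of this the maximum in the lemma is taken over more points than the corners at which the staircase formula for $\Upsilon$ is evaluated, so one must check the extra points are harmless: setting $L_m(t)=-2\,\#\bigl(S_J\cap[0,m)\bigr)+(m-g)t$, one has $L_{m+1}(t)-L_m(t)=t-2\le 0$ when $m\in S_J$ and $=t\ge 0$ when $m\notin S_J$, so for $t\in[0,2]$ the maximum over all $m$ is attained at the corner sublist and the two maxima agree. Finally, the statement is phrased with $g_4(J)$ while the staircase and semigroup data are governed by the Seifert genus; you use $g_4=g_3$ for $L$-space knots implicitly and should say so (it follows, e.g., from $\tau(J)=g_3(J)$ for $L$-space knots together with $\tau\le g_4\le g_3$). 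With these corrections your outline goes through and coincides with the argument of the reference the paper relies on.
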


We need one last computational lemma before we prove Theorem~\ref{thm:top}.

\begin{lemma}\label{lem:upsiloncompute} If $K_i = D_{2,2i+1} \# - T_{2,2i+1} \# -D$ where $i>1$, then 
\[\Upsilon_{K_i}(t)=\left\{
\begin{array}{cl}
-t&\text{for }t\in [0,1/2],\vspace{0.2cm}\\
-2 +3t&\text{for }t\in [1/2,1].\end{array}\right.\]
\end{lemma}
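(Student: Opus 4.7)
The plan is to reduce, via $\nu^+$-equivalence, to a computation of $\Upsilon$ for a single $L$-space knot using Wang's formal-semigroup formula.

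First, by Proposition~\ref{prop:nuplus} together with the fact that $\nu^+$-equivalent knots have equal $\Upsilon$ (\cite[Prop.~4.7]{OSS_2014}), and additivity of $\Upsilon$ under connected sum (Proposition~\ref{prop:upsilon_properties}(1)),
\[
\Upsilon_{K_i}(t) \;=\; \Upsilon_{(T_{2,3})_{2,2i+1}}(t) \;-\; \Upsilon_{T_{2,2i+3}}(t).
\]
The second summand equals $-(i+1)t$ on $[0,1]$ by Proposition~\ref{prop:upsilon_properties}(3). The task thus reduces to proving
\[
\Upsilon_{(T_{2,3})_{2,2i+1}}(t) \;=\; \begin{cases} -(i+2)\,t & t \in [0, 1/2], \\ -2 + (2-i)\,t & t \in [1/2, 1]. \end{cases}
\]

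Second, the cable $C := (T_{2,3})_{2,2i+1}$ is an $L$-space knot (Hedden's cabling criterion applies since $(2i+1)/2 \geq 2g(T_{2,3}) - 1 = 1$). By Lemma~\ref{lem:semigroup} its formal semigroup is $S = \langle 4, 6, 2i+1\rangle$, and its $4$-ball genus equals its Seifert genus $g = i+2$ (by the standard iterated-torus-knot genus formula). Lemma~\ref{lem:semigroupUpsilon} then gives
\[
\Upsilon_C(t) \;=\; \max_{0 \leq m \leq 2i+4} f_m(t), \qquad f_m(t) \;:=\; -2\,\bigl|S \cap [0,m)\bigr| + (m - i - 2)\,t.
\]

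Third, I would identify the argmax explicitly. A direct enumeration gives
\[
S \cap [0,\, 2i+4) \;=\; \{0, 4, 6, 8, \ldots, 2i+2\} \cup \{2i+1\},
\]
since $2i+1$ is the only odd element below $2i+4$ (note $2i+3 \notin S$ as $2 \notin \langle 4,6\rangle$). In particular $|S \cap [0,m)| = 1$ for $1 \leq m \leq 4$, so $f_0(t) = -(i+2)\,t$ and $f_4(t) = -2 + (2-i)\,t$ cross at $t = 1/2$ with common value $-(i+2)/2$. To finish it remains to show $\max_m f_m(t) = \max(f_0(t), f_4(t))$ on $[0,1]$; this is a finite comparison of linear functions that uses $i>1$ (for $i = 1$ the element $2i+1 = 3$ lies below $4$ and the enumeration, hence the answer, changes, which explains the hypothesis). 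Substituting back then yields $\Upsilon_{K_i}(t) = -t$ on $[0,1/2]$ and $-2 + 3t$ on $[1/2,1]$.

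The main obstacle is the dominance check in the last step: one must rule out that some $f_m$ with $5 \leq m \leq 2i+4$ exceeds both $f_0$ and $f_4$ somewhere on $[0,1]$. Convexity of $\Upsilon_C$ (as a pointwise maximum of linear functions) together with the symmetry $\Upsilon_C(t) = \Upsilon_C(2-t)$ cuts the analysis in half, and the explicit semigroup counts make the remaining comparisons routine; the small cases $i = 2, 3$ serve as a useful sanity check.
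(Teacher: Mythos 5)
Your route is the same as the paper's: reduce via Proposition~\ref{prop:nuplus} and additivity of $\Upsilon$ to the single cable $C=(T_{2,3})_{2,2i+1}$, invoke Hedden's cabling criterion, and feed the formal semigroup $\langle 4,6,2i+1\rangle$ and $g=i+2$ into Lemma~\ref{lem:semigroupUpsilon}; your enumeration $S\cap[0,2i+4)=\{0\}\cup\{4,6,\dots,2i+2\}\cup\{2i+1\}$, the identification of $f_0$ and $f_4$ crossing at $t=1/2$, and the final substitution using $\Upsilon_{T_{2,2i+3}}(t)=-(i+1)t$ all match the paper and are correct.

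The one step you defer is precisely where the paper's proof does its work: showing $f_m(t)\leq\max(f_0(t),f_4(t))$ on $[0,1]$ for $5\leq m\leq 2i+4$. Calling this a finite comparison of linear functions glosses over the fact that the number of competitors grows with $i$, so one needs either your explicit count or a uniform estimate; and the two tools you actually name do not supply it. Convexity of a pointwise maximum of lines does not prevent some $f_m$ with $m\geq 5$ from being the active line somewhere, and the symmetry $\Upsilon_C(t)=\Upsilon_C(2-t)$ exchanges $[0,1]$ with $[1,2]$, so it gives no reduction on the interval in question. The gap closes easily, e.g.\ as in the paper: $\langle 4,6\rangle\subset S$ gives $\#\bigl(S\cap[0,m)\bigr)\geq\lceil m/2\rceil-1$, hence $f_m(t)\leq -2\lceil m/2\rceil+2+\bigl(2\lceil m/2\rceil-i-2\bigr)t\leq -2+(2-i)t=f_4(t)$ for $t\in[0,1]$, the last step because $-2\lceil m/2\rceil+4\leq\bigl(-2\lceil m/2\rceil+4\bigr)t$ when $m\geq5$ and $t\leq1$. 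Alternatively, since $\max(f_0,f_4)$ is piecewise linear with its only breakpoint at $t=1/2$, it suffices to compare each linear $f_m$ with $f_0$ at $t=0,1/2$ and with $f_4$ at $t=1/2,1$, which your enumeration makes immediate; either way the verification, not the setup, is the actual content of the lemma and should be written out.
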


\begin{proof} By Proposition~\ref{prop:nuplus}, $K_i$ and $(T_{2,3})_{2,2i+1} \# - T_{2,2i+3}$ are $\nu^+$-equivalent and hence have the same $\Upsilon$-invariant. Moreover, by Proposition~\ref{prop:upsilon_properties} \eqref{Upsilonadditive} and \eqref{Upsilontorus}, it suffices to compute the $\Upsilon$-invariant of $(T_{2,3})_{2,2i+1}$. For $i>1$, note that  $(T_{2,3})_{2,2i+1}$ is an $L$-space knot since $2i+1 \geq 2(2g_4(T_{2,3})-1) = 2$ \cite[Thm.~1.10]{Hedden:2009-1} and note that $g_4((T_{2,3})_{2,2i+1})=i+2$. Hence, by Lemma~\ref{lem:semigroup} and Lemma~\ref{lem:semigroupUpsilon}, we obtain
\[
\Upsilon_{(T_{2,3})_{2,2i+1}}(t) = \max_{m\in \{0, \ldots 2i+4\}} \{-2 \#\left( \langle 4, 6, 2i+1 \rangle \cap [0,m) \right) +(m-i-2)\cdot t\}.\]
We claim that for $t \in [0,1]$,
\begin{equation}\label{eq:1}
\max_{m\in \{1, \ldots 2i+4\}} \{-2 \#\left( \langle 4, 6, 2i+1 \rangle \cap [0,m) \right) +(m-i-2)\cdot t\} = -2 + (2-i)\cdot t.
\end{equation}
Indeed, if $1 \leq m \leq 3$ and $t \in [0,1]$, then 
\begin{align*}-2 \#\left( \langle 4, 6, 2i+1 \rangle \cap [0,m) \right) +(m-i-2)\cdot t\ & = -2 + (m-i-2)\cdot t \\
& \leq -2 + (2-i)\cdot t.
\end{align*}
If instead $m=4$ and $t \in [0,1]$, then 
\begin{align*}-2 \#\left( \langle 4, 6, 2i+1 \rangle \cap [0,m) \right) +(m-i-2)\cdot t\ & = -2 + (2-i)\cdot t.
\end{align*}
Finally, if $5 \leq m \leq 2i+4$ and $t \in [0,1]$, since $$\langle 4,6 \rangle = \{4,6,8,10,12,\ldots \} \subset \langle 4,6,2i+1 \rangle$$ we have 
\begin{align*}-2 \#\left( \langle 4, 6, 2i+1 \rangle \cap [0,m) \right) +(m-i-2)\cdot t\ & \leq -2 \left( \lceil m/2 \rceil -1 \right)+ \left( m-i-2 \right)\cdot t\\
&\leq -2 \lceil m/2 \rceil +2 + \left( 2\lceil m/2 \rceil-i -2 \right)\cdot t\\
&\leq -2 + (2-i)\cdot t,
\end{align*}
where the last inequality follows from the fact that if $5 \leq m \leq 2i+4$ and $t \in [0,1]$, then
$$(-2\lceil m/2 \rceil +4) \leq (-2\lceil m/2 \rceil +4)\cdot t.$$
Hence we have verified equation~\eqref{eq:1}. Finally, note that this implies that 
\begin{align*}
\Upsilon_{(T_{2,3})_{2,2i+1}}(t) &= \max_{m\in \{0, 4\}} \{-2 \#\left( \langle 4, 6, 2i+1 \rangle \cap [0,m) \right) +(m-i-2)\cdot t\}\\
&= \left\{
\begin{array}{cl}
-(i+2)\cdot t&\text{for }t\in [0,1/2],\vspace{0.2cm}\\
-2 +(2-i)\cdot t&\text{for }t\in [1/2,1].\end{array}\right.
\end{align*}
Combining the above 
with Proposition~\ref{prop:upsilon_properties}~\eqref{Upsilonadditive} and~\eqref{Upsilontorus} yields the desired formula.\end{proof}

\begin{proof}[Proof of Theorem~\ref{thm:top}] Let $K_i = D_{2,2i+1} \# - T_{2,2i+1} \# -D$ where $i>1$. The knot $D$ is a topologically slice knot~\cite{Freedman:1982-1}, which implies that $K_i$ is topologically concordant to $T_{2,2i+1} \# - T_{2,2i+1}$ which is slice.  Hence $K_i$ is topologically slice.

We claim that $g_4(\#^n K_i) =n$.
First, we show that $g_4(K_i) =1$ by following the same argument as in \cite[Lem.~3.3]{Hom-Wu:2016-1}.
We consider a genus $i+2$ Seifert surface $\Sigma$ for $D_{2,2i+1}$ obtained by taking two parallel copies of the genus one Seifert surface for $D$ and connecting them with $i$ half-twisted bands.
Note that there is a genus $i+1$ Seifert surface $\Sigma'$ for the knot $D\# T_{2,2i+1}$ embedded in $\Sigma$.
The Seifert surface $\Sigma'$ is obtained by taking the connected sum with the slightly pushed in Seifert surface for $D$ and the surface which is obtained by pushing in the half-twisted bands.
Now, we consider a Seifert surface $\widetilde{\Sigma}$ for $K_i$ obtained by taking the boundary connected sum of $\Sigma$ with the Seifert surface for $-T_{2,2i+1} \# -D$ with genus $i+1$.
Hence the genus of $\widetilde{\Sigma}$ is $2i+3$.
Moreover, taking the boundary connected sum of $\Sigma'$ with a slightly pushed in Seifert surface for $-T_{2,2i+1} \# -D$ yields a Seifert surface $\widetilde{\Sigma}'$ for $J = T_{2,2i+1} \# D \# - T_{2,2i+1} \# -D$ embedded in $\widetilde{\Sigma}$ with genus $2i+2$.
Performing surgery along $J$ oN $\widetilde{\Sigma}$ in $B^4$ yields a genus 1 surface for $K_i$. Hence we conclude that $g_4(K_i) \leq 1$.
Moreover, by Lemma~\ref{lem:upsiloncompute} and Proposition~\ref{prop:upsilon_properties} \eqref{Upsilonadditive} and  \eqref{Upsilongenus}, we have $g_4(\#^n K_i)= n$ for any positive integer $n$.

We conclude the proof by applying Lemma~\ref{lem:upsilonbound} to 
\[\Upsilon_{K_i}(t)/t = -1  \quad\mbox{for } t\in (0,1/2] \et \Upsilon_{K_i}(t)/t = 1  \quad\mbox{for } t=1\quad\text{(Lemma~\ref{lem:upsiloncompute})}.\qedhere\]
\end{proof}
\bibliographystyle{alpha}
\def\MR#1{}
\bibliography{bib}
\end{document}